\numberwithin{equation}{section}
\renewcommand\arraystretch{1.5}
\newtheorem{remark}{Remark}[section]
\title{Energy stable schemes for gradient flows based on novel auxiliary variable with energy bounded above.
        \thanks{
We would like to acknowledge the assistance of volunteers in putting together this example manuscript and supplement. The author thanks for the financial support from China Scholarship Council.}}
      \author{Zhengguang Liu
             \thanks{Corresponding author. School of Mathematics and Statistics, Shandong Normal University, Jinan, China. Email: liuzhgsdu@yahoo.com.}}
\begin{document}

\maketitle

\begin{abstract}
In this paper, we consider a novel auxiliary variable method to obtain energy stable schemes for gradient flows. The auxiliary variable based on energy bounded above does not limited to the hypothetical conditions adopted in previous approaches. We proved the unconditional energy stability for all the semi-discrete schemes carefully and rigorously. The novelty of the proposed schemes is that the computed values for the functional in square root are guaranteed to be positive. This method, termed novel auxiliary energy variable (NAEV) method does not consider any bounded below restrictions any longer. However, these restrictions are necessary in invariant energy quadratization (IEQ) and scalar auxiliary variable (SAV) approaches which are very popular methods recently. This property of guaranteed positivity is not available in previous approaches. A comparative study of classical SAV and NAEV approaches is considered to show the accuracy and efficiency. Finally, we present various 2D numerical simulations to demonstrate the stability and accuracy.
\end{abstract}

\begin{keywords}
Gradient flows, scalar auxiliary variable, auxiliary energy variable, energy stability, numerical simulations.
\end{keywords}

    \begin{AMS}
         65M12; 35K20; 35K35; 35K55; 65Z05.
    \end{AMS}

\pagestyle{myheadings}
\thispagestyle{plain}
\markboth{ZHENGGUANG LIU} {Deformed SAV approach for gradient flows}
  \section{Introduction}
Gradient flows are very effective models to simulate many physical phenomena and have been successfully applied to many fields such as mathematics, mechanics, materials science and other fields \cite{ambati2015review,guo2015thermodynamically,liu2019efficient,marth2016margination,miehe2010phase,shen2015efficient,wheeler1992phase,wheeler1993computation}. In general, gradient flows from the energetic variation of the energy functional $E(\phi)$ can be obtained as follows:
\begin{equation*}
\frac{\partial\phi}{\partial t}=\mathcal{G}\frac{\delta E}{\delta \phi},
\end{equation*}
where $\frac{\delta E}{\delta \phi}$ is variational derivative. $\mathcal{G}$ is a non-positive operator.

Usually, the free energy functional $E(\phi)$ contains a quadratic term and a integral term of a nonlinear functional, which can be written explicitly as follows \cite{shen2017new}
\begin{equation}\label{section1_energy}
E(\phi)=(\phi,\mathcal{L}\phi)+E_1(\phi),
\end{equation}
where $\mathcal{L}$ is a symmetric non-negative linear operator and $E_1(\phi)$ is nonlinear but with only lower-order derivatives than $\mathcal{L}$.

Energy stable is a very important property for gradient flows. This property is still essential for numerical schemes. Up to now, many scholars considered a series of  efficient and popular numerical approaches to construct energy stable schemes for gradient flows. For instance, one of popular approaches is the convex splitting method which was introduced by \cite{eyre1998unconditionally,shin2016first}. Another widely used approach is the linear stabilized scheme which can be found in \cite{shen2010numerical,yang2017numerical}. Recently, the invariant energy quadratization (IEQ) approach which were proposed by \cite{shen2018scalar,yang2016linear} has been proven to be very powerful ways to construct energy stable schemes. However, IEQ approach still has some drawbacks which can be seen in \cite{shen2017new,shen2018scalar}. Based on the core idea of the IEQ approach, Shen et. al. \cite{shen2017new,shen2018scalar} introduced the so-called scalar auxiliary variable (SAV) approach. This method can almost inherit all advantages of IEQ approach but also overcome most of its shortcomings.

In order to show and give a comparative study for our novel auxiliary energy variable approach, we provide below a brief review of SAV approach to construct energy stable schemes for gradient flows. The key of the SAV approach is to transform the nonlinear potential into a simple quadratic form. This transformation makes the nonlinear term much easier to handle. Assuming that $E_1(\phi)$ is bounded from below. It means that there exists a constant $C$ to make $E_1(\phi)+C>0$. Then, define a scalar auxiliary variable $r(t)=\sqrt{E_1(\phi)+C}$. An equivalent system of gradient flows can be rewritten as follows
\begin{equation}\label{section1_sav1}
  \left\{
   \begin{array}{rll}
\displaystyle\frac{\partial \phi}{\partial t}&=&\mathcal{G}\mu\\
\mu&=&\displaystyle\mathcal{L}\phi+\frac{r}{\sqrt{E_1(\phi)+C}}U(\phi),\\
r_t&=&\displaystyle\frac{1}{2\sqrt{E_1(\phi)+C}}\int_{\Omega}U(\phi)\phi_td\textbf{x},
   \end{array}
   \right.
\end{equation}
where $U(\phi)=\frac{\delta E_1}{\delta \phi}$.

Taking the inner products of the above equations with $\mu$, $\phi_t$ and $2r$, respectively, we obtain that the above equivalent system satisfies a modified energy dissipation law:
\begin{equation*}
\frac{d}{dt}\left[(\phi,\mathcal{L}\phi)+r^2\right]=(\mathcal{G}\mu,\mu)\leq0.
\end{equation*}

A second-order semi-discrete scheme based on the Crank-Nicolson method, reads as follows
\begin{equation}\label{section1_sav2}
  \left\{
   \begin{array}{rll}
\displaystyle\frac{\phi^{n+1}-\phi^{n}}{\Delta t}&=&\mathcal{G}\mu^{n+1/2},\\
\mu^{n+1/2}&=&\displaystyle\mathcal{L}\left(\frac{\phi^{n+1}+\phi^n}{2}\right)+\frac{r^{n+1}+r^n}{2\sqrt{E_1(\tilde{\phi}^{n+1/2})+C}}U(\tilde{\phi}^{n+1/2}),\\
\displaystyle\frac{r^{n+1}-r^n}{\Delta t}&=&\displaystyle\frac{1}{2\sqrt{E_1(\tilde{\phi}^{n+1/2})+C}}\int_{\Omega}U(\tilde{\phi}^{n+1/2})\frac{\phi^{n+1}-\phi^{n}}{\Delta t}d\textbf{x},
   \end{array}
   \right.
\end{equation}

It is not difficult to prove the above scheme is unconditionally energy stable in the sense that
\begin{equation*}
\aligned
\left[\frac12(\mathcal{L}\phi^{n+1},\phi^{n+1})+|r^{n+1}|^2\right]-\left[\frac12(\mathcal{L}\phi^{n},\phi^{n})+|r^{n}|^2\right]\leq\Delta t(\mathcal{G}\mu^{n+1/2},\mu^{n+1/2})\leq0,
\endaligned
\end{equation*}

The SAV approach is a very efficient and powerful way to construct energy stable schemes and it is easy to calculate. However, we think there is still one point to optimize. As we all know, the SAV approach needs an assumption that $E_1(\phi)$ is bounded from below to keep the square root reasonable. It means that there is a positive constant $C$ to make $E_1(\phi)+C>0$. In calculation, we notice that different values of the constant $C$ in square root may affect the effect of simulation. Relative study can be found in \cite{lin2019numerical}. In their study, they found that the error histories for $C=0.01$ and $C=500$ exhibit quite different characteristics. The error corresponding to $C$ decreases quickly but the error corresponding to $C=500$ decreases extremely slowly at this stage. In our numerical simulation in the example 6, we find that a the constant has to satisfy $C>6100$ to keep the square root reasonable. To avoid the difference like that, in this paper, we aim to find reasonable procedure to avoid using a estimated number $C$ during the calculation. We consider a novel auxiliary variable method to obtain energy stable schemes for gradient flows. The property of dissipative energy law means that the energy bounded above restriction is the initial energy $E_0$. We replace $C$ with $E_0$ in square root before calculation to construct NAEV method. The auxiliary variable based on energy bounded above does not limited to the hypothetical conditions adopted in previous approaches. We proved the unconditional energy stability for the semi-discrete schemes carefully and rigorously. The contribution of this paper is that the developed NAEV approach gets rid of the assumption that $E_1(\phi)$ is bounded from below which is essential in SAV approach. A comparative study of classical SAV and NAEV approaches is considered to show the accuracy and efficiency. Finally, we present various 2D numerical simulations to demonstrate the stability and accuracy.

The paper is organized as follows. In Sect.2, we consider two NAEV approaches for gradient flows. Then, we proved the unconditional energy stability for the semi-discrete scheme. In Sect.3, various 2D numerical simulations are demonstrated to verify the accuracy and efficiency of our proposed schemes.

\section{NAEV approaches for gradient flows}
In this section, we will consider some efficient procedures to give two NAEV approaches for gradient flows to get rid of the bounded from below restriction which is essential in SAV approach. For gradient flows, the dissipative energy law means $\frac{d}{dt}E(\phi)\leq0$. Then, an obvious property will hold as follows
\begin{equation}\label{section2_dsav1}
E(\phi(\textbf{x},0))\geq E(\phi(\textbf{x},t)), \quad \forall \textbf{x}\in\Omega,t\geq0.
\end{equation}

Next, we will take advantage of above property \eqref{section2_dsav1} to develop two deformed SAV approaches. Considering the definition of the energy in \eqref{section1_energy} and noting that $\mathcal{L}$ is a symmetric non-negative linear operator, it is not difficult to obtain the following inequality
\begin{equation}\label{section2_dsav2}
E(\phi(\textbf{x},0))-E_1(\phi(\textbf{x},t))=E(\phi(\textbf{x},0))-E(\phi(\textbf{x},t))+(\phi,\mathcal{L}\phi)\geq(\phi,\mathcal{L}\phi)\geq0, \quad \forall \textbf{x}\in\Omega,t\geq0.
\end{equation}
Then, define a novel auxiliary energy variable (NAEV)
\begin{equation}\label{section2_dsav_r1}
r(t)=\sqrt{E(\phi(\textbf{x},0))-E_1(\phi(\textbf{x},t))+\kappa},
\end{equation}
where $\kappa$ is an arbitrary sufficiently small enough non-negative constant just to ensure $r(t)\neq0$ strictly.

\begin{remark}
The initial $E(\phi(\textbf{x},0))$ can be computed by initial condition before calculation. $\kappa$ is an arbitrary constant. In calculation, we can choose $\kappa=0$ or very small constant such as $\kappa=1e-15$. This definition of the auxiliary energy variable $r(t)$ avoids using a estimated number $C$ during the calculation efficiently.
\end{remark}

For the sake of brevity, we set $E_0=E(\phi(\textbf{x},0))$. An equivalent system of gradient flows can be rewritten as follows
\begin{equation}\label{section2_dsav3}
  \left\{
   \begin{array}{rll}
\displaystyle\frac{\partial \phi}{\partial t}&=&\mathcal{G}\mu\\
\mu&=&\displaystyle\mathcal{L}\phi+\frac{r}{\sqrt{E_0-E_1(\phi)+\kappa}}U(\phi),\\
r_t&=&\displaystyle-\frac{1}{2\sqrt{E_0-E_1(\phi)+\kappa}}\int_{\Omega}U(\phi)\phi_td\textbf{x}.
   \end{array}
   \right.
\end{equation}
Taking the inner products of the above equations with $\mu$, $\phi_t$ and $2r$, respectively, we obtain that the above equivalent system satisfies a modified energy dissipation law:
\begin{equation*}
\frac{d}{dt}\left[(\phi,\mathcal{L}\phi)-|r|^2\right]=(\mathcal{G}\mu,\mu)\leq0.
\end{equation*}

Before giving a semi-discrete formulation, we let $N>0$ be a positive integer and set
\begin{equation*}
\Delta t=T/N,\quad t^n=n\Delta t,\quad \text{for}\quad n\leq N.
\end{equation*}
\subsection{The first-order NAEV scheme for gradient flows}
A first order scheme for solving the system \eqref{section2_dsav3} can be readily derived by the backward Euler¡¯s method. The first-order scheme can be written as follows:
\begin{equation}\label{section2_dsav4}
  \left\{
   \begin{array}{rll}
\displaystyle\frac{\phi^{n+1}-\phi^{n}}{\Delta t}&=&\mathcal{G}\mu^{n+1},\\
\mu^{n+1}&=&\displaystyle\mathcal{L}\phi^{n+1}+\frac{r^{n+1}+r^n}{2\sqrt{E_0-E_1(\tilde{\phi}^{n+1})+\kappa}}U(\tilde{\phi}^{n+1}),\\
\displaystyle\frac{r^{n+1}-r^n}{\Delta t}&=&\displaystyle-\frac{1}{2\sqrt{E_0-E_1(\tilde{\phi}^{n+1})+\kappa}}\int_{\Omega}U(\tilde{\phi}^{n+1})\frac{\phi^{n+1}-\phi^{n}}{\Delta t}d\textbf{x},
   \end{array}
   \right.
\end{equation}
where $\tilde{\phi}^{n+1}$ is any explicit $O(\Delta t)$ approximation for $\phi(t^{n+1})$, which can be flexible according to the problem. Here, we choose $\tilde{\phi}^{n+1}=2\phi^n-\phi^{n-1}$ for $n\geq1$.
\begin{theorem}\label{section2_th_dsav}
The first-order NAEV scheme \eqref{section2_dsav4} admits a unique solution satisfying the following discrete energy dissipation law,
\begin{equation}\label{section2_dsav5}
\aligned
E_{1st}^{n+1}-E^{n}_{1st}\leq\Delta t(\mathcal{G}\mu^{n+1},\mu^{n+1})\leq0,
\endaligned
\end{equation}
where the modified discrete version of the energy \eqref{section1_energy} is defined by
\begin{equation*}
\aligned
E_{1st}^{n}=\frac12\|\phi^{n}\|_{\mathcal{L}}^2-|r^{n}|^2.
\endaligned
\end{equation*}
\end{theorem}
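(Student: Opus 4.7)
The plan is to establish the energy identity first (which gives both the dissipation inequality and implies the system is well-posed), and then separately remark on unique solvability. The energy estimate is a standard ``test each equation and combine'' argument tailored to the fact that the NAEV modified energy is $\frac12\|\phi^n\|_{\mathcal{L}}^2 - |r^n|^2$, with a \emph{minus} sign in front of $|r^n|^2$; this sign is what makes the argument work with the minus sign appearing in the third equation of the scheme \eqref{section2_dsav4}.

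Concretely, I would perform three tests. First, take the $L^2$ inner product of the discrete equation for $\phi^{n+1}$ in \eqref{section2_dsav4} with $\Delta t\,\mu^{n+1}$ to get
\begin{equation*}
(\phi^{n+1}-\phi^{n},\mu^{n+1}) = \Delta t\,(\mathcal{G}\mu^{n+1},\mu^{n+1}).
\end{equation*}
Second, take the inner product of the equation for $\mu^{n+1}$ with $\phi^{n+1}-\phi^{n}$ so that the left-hand side matches the expression just obtained, while the right-hand side produces $(\mathcal{L}\phi^{n+1},\phi^{n+1}-\phi^{n})$ plus the nonlinear coupling term
$\frac{r^{n+1}+r^n}{2\sqrt{E_0-E_1(\tilde\phi^{n+1})+\kappa}}(U(\tilde\phi^{n+1}),\phi^{n+1}-\phi^{n})$.
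Third, multiply the discrete $r$-equation by $\Delta t(r^{n+1}+r^{n})$; since $(r^{n+1}-r^{n})(r^{n+1}+r^{n})=|r^{n+1}|^2-|r^{n}|^2$, this yields
\begin{equation*}
|r^{n+1}|^2-|r^{n}|^2 = -\frac{r^{n+1}+r^{n}}{2\sqrt{E_0-E_1(\tilde\phi^{n+1})+\kappa}}\bigl(U(\tilde\phi^{n+1}),\phi^{n+1}-\phi^{n}\bigr).
\end{equation*}
The critical cancellation is that the nonlinear coupling term appearing in the second identity is exactly the negative of the right-hand side of the third identity, so the $U(\tilde\phi^{n+1})$ contributions (which contain the only potentially dangerous nonlinearities) drop out cleanly.

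Combining the three identities I obtain
\begin{equation*}
\Delta t\,(\mathcal{G}\mu^{n+1},\mu^{n+1}) = (\mathcal{L}\phi^{n+1},\phi^{n+1}-\phi^{n}) - \bigl(|r^{n+1}|^2-|r^{n}|^2\bigr).
\end{equation*}
For the $\mathcal{L}$-term I use the symmetric non-negative identity
\begin{equation*}
(\mathcal{L}\phi^{n+1},\phi^{n+1}-\phi^{n}) = \tfrac12\|\phi^{n+1}\|_{\mathcal{L}}^2 - \tfrac12\|\phi^{n}\|_{\mathcal{L}}^2 + \tfrac12\|\phi^{n+1}-\phi^{n}\|_{\mathcal{L}}^2,
\end{equation*}
which is valid because $\mathcal{L}$ is symmetric and non-negative. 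Substituting and regrouping gives
\begin{equation*}
E_{1st}^{n+1}-E_{1st}^{n} + \tfrac12\|\phi^{n+1}-\phi^{n}\|_{\mathcal{L}}^2 = \Delta t\,(\mathcal{G}\mu^{n+1},\mu^{n+1}),
\end{equation*}
so dropping the non-negative term $\tfrac12\|\phi^{n+1}-\phi^{n}\|_{\mathcal{L}}^2$ and using that $\mathcal{G}$ is non-positive yields exactly \eqref{section2_dsav5}.

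For unique solvability, since $\tilde\phi^{n+1}$ is explicitly known, the system \eqref{section2_dsav4} is linear in $(\phi^{n+1},\mu^{n+1},r^{n+1})$. I would eliminate $r^{n+1}$ using the third equation to write $r^{n+1}+r^n = 2r^n - \frac{1}{2\sqrt{E_0-E_1(\tilde\phi^{n+1})+\kappa}}(U(\tilde\phi^{n+1}),\phi^{n+1}-\phi^{n})$, substitute into the second equation, and then use the first equation to reduce everything to a single linear equation for $\phi^{n+1}$. The resulting operator is a rank-one perturbation of $I - \Delta t\,\mathcal{G}\mathcal{L}$, which is invertible because $-\mathcal{G}$ is non-negative and $\mathcal{L}$ is symmetric non-negative; invertibility of the rank-one update is handled by the standard Sherman--Morrison argument used in SAV-type papers (equivalently, by solving two decoupled auxiliary problems of the form $(I-\Delta t\,\mathcal{G}\mathcal{L})\psi = \text{data}$ and recombining with a scalar determined by the $r$-equation). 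The main obstacle is not in the energy identity itself, which is routine once one notices the sign cancellation, but rather in making the solvability argument watertight when $\mathcal{G}$ has a non-trivial kernel; this can be dealt with by restricting to the subspace orthogonal to $\ker\mathcal{G}$ (typically constants), as is traditional for gradient flows.
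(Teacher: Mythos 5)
Your proof is correct and follows essentially the same route as the paper: the same three inner products (with $\Delta t\,\mu^{n+1}$, $\phi^{n+1}-\phi^{n}$, and $\pm\Delta t(r^{n+1}+r^{n})$), the same cancellation of the $U(\tilde\phi^{n+1})$ coupling terms, and the same identity $(\mathcal{L}x,x-y)=\tfrac12\|x\|_{\mathcal{L}}^2-\tfrac12\|y\|_{\mathcal{L}}^2+\tfrac12\|x-y\|_{\mathcal{L}}^2$ to extract the telescoping energy difference. Your additional Sherman--Morrison sketch of unique solvability goes beyond the paper, which asserts uniqueness in the theorem statement but never proves it.
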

\begin{proof}
By taking the inner products with $\Delta t\mu^{n+1}$, $(\phi^{n+1}-\phi^n)$, and $-\Delta t(r^{n+1}+r^{n})$ for the three equations in the first-order scheme \eqref{section2_dsav4} respectively and some simple calculations, we obtain
\begin{equation}\label{section2_dsav6}
\aligned
(\phi^{n+1}-\phi^{n},\mu^{n+1})=\Delta t(\mathcal{G}\mu^{n+1},\mu^{n+1}),
\endaligned
\end{equation}
\begin{equation}\label{section2_dsav7}
\aligned
(\phi^{n+1}-\phi^{n},\mu^{n+1})=
&\displaystyle(\mathcal{L}\phi^{n+1},\phi^{n+1}-\phi^{n})+\frac12\left(\frac{r^{n+1}+r^n}{\sqrt{E_0-E_1(\tilde{\phi}^{n+1})+\kappa}}U(\tilde{\phi}^{n+1}),\phi^{n+1}-\phi^{n}\right),
\endaligned
\end{equation}
and
\begin{equation}\label{section2_dsav8}
\aligned
|r^{n}|^2-|r^{n+1}|^2=\displaystyle\frac12\left(\frac{r^{n+1}+r^n}{\sqrt{E_0-E_1(\tilde{\phi}^{n+1})+\kappa}}U(\tilde{\phi}^{n+1}),\phi^{n+1}-\phi^{n}\right).
\endaligned
\end{equation}

Applying the following identity
\begin{equation*}
\aligned
(\mathcal{L}x,x-y)=\frac12\|x\|_{\mathcal{L}}^2-\frac12\|y\|_{\mathcal{L}}^2+\frac12\|x-y\|_{\mathcal{L}}^2,
\endaligned
\end{equation*}
where we define $\|x\|_{\mathcal{L}}^2=(\mathcal{L}x,x)$. Let $x=\phi^{n+1}$, $y=\phi^{n}$, it is easy to obtain
\begin{equation}\label{section2_dsav9}
\aligned
(\mathcal{L}\phi^{n+1},\phi^{n+1}-\phi^{n})=\frac12\|\phi^{n+1}\|_{\mathcal{L}}^2-\frac12\|\phi^{n}\|_{\mathcal{L}}^2+\frac12\|\phi^{n+1}-\phi^{n}\|_{\mathcal{L}}^2
\endaligned
\end{equation}
Combining the equations \eqref{section2_dsav7}-\eqref{section2_dsav9} with \eqref{section2_dsav6} and noting that $\mathcal{G}$ is a non-positive operator, we obtain that
\begin{equation*}
\aligned
0\geq\Delta t(\mathcal{G}\mu^{n+1},\mu^{n+1})\geq\frac12\|\phi^{n+1}\|_{\mathcal{L}}^2-|r^{n+1}|^2-\frac12\|\phi^{n}\|_{\mathcal{L}}^2+|r^{n}|^2=E_{1st}^{n+1}-E^{n}_{1st}.
\endaligned
\end{equation*}
which completes the proof.
\end{proof}
\subsection{The second-order NAEV scheme for gradient flows}
In this subsection, we aim to propose a second-order energy stable semi-discrete NAEV scheme for gradient flows. For any $t^{n+1/2}\geq0$, the following inequality is also satisfied:
\begin{equation*}
\aligned
E(\phi(\textbf{x},0))-E_{1}(\phi(\textbf{x},t^{n+1/2}))
&=E(\phi(\textbf{x},0))-E(\phi(\textbf{x},t^{n+1/2}))+(\phi^{n+1/2},\mathcal{L}\phi^{n+1/2})\\
&\geq(\phi^{n+1/2},\mathcal{L}\phi^{n+1/2})\geq0, \quad \forall \textbf{x}\in\Omega,t^n\geq0.
\endaligned
\end{equation*}

A second-order semi-discrete NAEV scheme based on the Crank-Nicolson method, reads as follows
\begin{equation}\label{section2_dsav10}
  \left\{
   \begin{array}{rll}
\displaystyle\frac{\phi^{n+1}-\phi^{n}}{\Delta t}&=&\mathcal{G}\mu^{n+1/2},\\
\mu^{n+1/2}&=&\displaystyle\mathcal{L}\left(\frac{\phi^{n+1}+\phi^n}{2}\right)+\frac{r^{n+1}+r^n}{2\sqrt{E_0-E_1(\tilde{\phi}^{n+1/2})+\kappa}}U(\tilde{\phi}^{n+1/2}),\\
\displaystyle\frac{r^{n+1}-r^n}{\Delta t}&=&\displaystyle-\frac{1}{2\sqrt{E_0-E_1(\tilde{\phi}^{n+1/2})+\kappa}}\int_{\Omega}U(\tilde{\phi}^{n+1/2})\frac{\phi^{n+1}-\phi^{n}}{\Delta t}d\textbf{x},
   \end{array}
   \right.
\end{equation}
where $\tilde{\phi}^{n+\frac{1}{2}}$ is any explicit $O(\Delta t^2)$ approximation for $\phi(t^{n+\frac{1}{2}})$, which can be flexible according to the problem. Here, we choose $\tilde{\phi}^{n+\frac{1}{2}}=(3\phi^n-\phi^{n-1})/2$ for $n>0$. For $n=0$, we compute $\tilde{\phi}^{\frac{1}{2}}$ by using the following simple scheme:
\begin{equation*}
\frac{\tilde{\phi}^{\frac{1}{2}}-\phi^0}{(\Delta t)/2}=\mathcal{G}\left(\mathcal{L}\tilde{\phi}^{\frac{1}{2}}+F'(\phi^0)\right).
\end{equation*}

\begin{theorem}\label{section2_th_dsav2}
The NAEV-CN scheme \eqref{section2_dsav10} for the gradient flows is unconditionally energy stable in the sense that
\begin{equation}\label{section2_dsav11}
\aligned
E_{NAEV-CN}^{n+1}-E^{n}_{NAEV-CN}\leq\Delta t(\mathcal{G}\mu^{n+1/2},\mu^{n+1/2})\leq0,
\endaligned
\end{equation}
where the modified discrete version of the energy \eqref{section1_energy} is defined by
\begin{equation*}
\aligned
E_{NAEV-CN}^{n}=\frac12(\mathcal{L}\phi^{n},\phi^{n})-|r^{n}|^2.
\endaligned
\end{equation*}
\end{theorem}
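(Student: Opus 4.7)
The approach will closely mirror the proof of Theorem \ref{section2_th_dsav}, adapted for the Crank--Nicolson discretization. I would test the three equations in \eqref{section2_dsav10} with three carefully chosen multipliers: the first with $\Delta t\,\mu^{n+1/2}$, the second with $(\phi^{n+1}-\phi^{n})$, and the third with $-\Delta t(r^{n+1}+r^{n})$. These choices are motivated exactly as in the first-order case: they are tailored so that the nonlinear contributions coming from the second and third equations cancel identically, leaving behind only the quadratic operator term and the difference of $|r|^{2}$.

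The first test gives $(\phi^{n+1}-\phi^{n},\mu^{n+1/2})=\Delta t(\mathcal{G}\mu^{n+1/2},\mu^{n+1/2})$, which is non-positive because $\mathcal{G}$ is non-positive. The second test rewrites the same inner product as $\bigl(\mathcal{L}(\phi^{n+1}+\phi^{n})/2,\phi^{n+1}-\phi^{n}\bigr)$ plus a nonlinear contribution. For the quadratic part I would invoke the clean Crank--Nicolson identity
\[
\left(\mathcal{L}\frac{\phi^{n+1}+\phi^{n}}{2},\phi^{n+1}-\phi^{n}\right)=\frac12\|\phi^{n+1}\|_{\mathcal{L}}^{2}-\frac12\|\phi^{n}\|_{\mathcal{L}}^{2},
\]
valid because $\mathcal{L}$ is symmetric; this is an exact equality, in contrast to the backward Euler identity \eqref{section2_dsav9} which carries an extra dissipative $\tfrac12\|\phi^{n+1}-\phi^{n}\|_{\mathcal{L}}^{2}$ term. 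The third test produces $(r^{n+1}-r^{n})(r^{n+1}+r^{n})=|r^{n+1}|^{2}-|r^{n}|^{2}$ on the left.

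The decisive observation is that the nonlinear terms surviving from the second and third tests share the common factor $\dfrac{r^{n+1}+r^{n}}{2\sqrt{E_{0}-E_{1}(\tilde{\phi}^{n+1/2})+\kappa}}\bigl(U(\tilde{\phi}^{n+1/2}),\phi^{n+1}-\phi^{n}\bigr)$ and appear with opposite signs, so they cancel exactly---no sign condition on $U(\tilde{\phi}^{n+1/2})$ is required, which is precisely the point of the NAEV reformulation. Combining the three relations then yields
\[
\frac12\|\phi^{n+1}\|_{\mathcal{L}}^{2}-|r^{n+1}|^{2}-\frac12\|\phi^{n}\|_{\mathcal{L}}^{2}+|r^{n}|^{2}=\Delta t(\mathcal{G}\mu^{n+1/2},\mu^{n+1/2})\leq 0,
\]
which is the claimed inequality \eqref{section2_dsav11}.

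I do not anticipate any serious obstacle. The scheme is engineered so that the averaging $(r^{n+1}+r^{n})/2$ and the evaluation of $\sqrt{E_{0}-E_{1}(\tilde{\phi}^{n+1/2})+\kappa}$ at the explicit intermediate state interact cleanly under the three inner-product tests. Well-definedness of the square root at $t^{n+1/2}$ is handled by the inequality derived just before the theorem statement together with $\kappa\geq 0$. The only mild stylistic difference from the first-order analysis is that the Crank--Nicolson quadratic identity is sharp rather than dissipative, so the final relation is an equality up to the $(\mathcal{G}\mu^{n+1/2},\mu^{n+1/2})$ term, as befits a second-order scheme.
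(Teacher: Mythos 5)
Your proposal is correct and follows essentially the same route the paper sketches (test the three equations with $\Delta t\,\mu^{n+1/2}$, $\phi^{n+1}-\phi^{n}$, and $\pm\Delta t(r^{n+1}+r^{n})$, use the exact Crank--Nicolson identity for the $\mathcal{L}$ term, and let the nonlinear contributions cancel); the paper uses the multiplier $+\Delta t(r^{n+1}+r^{n})$ where you use the opposite sign, which is immaterial. The only nit is a bookkeeping slip: with your stated multiplier $-\Delta t(r^{n+1}+r^{n})$ the third test yields $|r^{n}|^{2}-|r^{n+1}|^{2}$ on the left, not $|r^{n+1}|^{2}-|r^{n}|^{2}$, but your final combined identity is the correct one.
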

By taking the inner products with $\Delta t\mu^{n+1/2}$, $(\phi^{n+1}-\phi^n)$, and $\Delta t(r^{n+1}+r^n)$ for the three equations in scheme \eqref{section2_dsav10} respectively and some simple calculations, it is not difficult to obtain the proof.

\subsection{The NAEV scheme with imaginary number for gradient flows}
From above two subsections, we construct two NAEV schemes for gradient flows. We find that a minus in expression of $r_t$ leads to the difference between SAV scheme and NAEV scheme. In this subsection, we try to develop a NAEV with imaginary number scheme which is exactly same with the classical SAV scheme in form. We redefine an auxiliary energy variable with imaginary number as follows:
\begin{equation}\label{section2_dsav_r2}
r(t)=i\sqrt{E(\phi(\textbf{x},0))-E_1(\phi(\textbf{x},t))+\kappa},
\end{equation}
where $i^2=-1$. It is not difficult to obtain that
\begin{equation*}
r_t=\frac{-iU(\phi)}{\sqrt{E(\phi(\textbf{x},0))-E_1(\phi(\textbf{x},t))+\kappa}}=\frac{U(\phi)}{i\sqrt{E(\phi(\textbf{x},0))-E_1(\phi(\textbf{x},t))+\kappa}}.
\end{equation*}
The advantage of above definition is that the discrete energy is exactly the same with the classical SAV scheme in form.

A novel second-order semi-discrete NAEV scheme based on the Crank-Nicolson method, reads as follows
\begin{equation}\label{section2_dsav12}
  \left\{
   \begin{array}{rll}
\displaystyle\frac{\phi^{n+1}-\phi^{n}}{\Delta t}&=&\mathcal{G}\mu^{n+1/2},\\
\mu^{n+1/2}&=&\displaystyle\mathcal{L}\left(\frac{\phi^{n+1}+\phi^n}{2}\right)+\frac{r^{n+1}+r^n}{2i\sqrt{E_0-E_1(\tilde{\phi}^{n+1/2})+\kappa}}U(\tilde{\phi}^{n+1/2}),\\
\displaystyle\frac{r^{n+1}-r^n}{\Delta t}&=&\displaystyle\frac{1}{2i\sqrt{E_0-E_1(\tilde{\phi}^{n+1/2})+\kappa}}\int_{\Omega}U(\tilde{\phi}^{n+1/2})\frac{\phi^{n+1}-\phi^{n}}{\Delta t}d\textbf{x},
   \end{array}
   \right.
\end{equation}
In calculation, it is not difficult to find that $i$ will not be used actually.
\begin{theorem}\label{section2_th_dsav3}
The NAEV-CN scheme with imaginary number \eqref{section2_dsav12} for the gradient flows is unconditionally energy stable in the sense that
\begin{equation}\label{section2_dsav13}
\aligned
E_{NAEV-CN}^{n+1}-E^{n}_{NAEV-CN}\leq\Delta t(\mathcal{G}\mu^{n+1/2},\mu^{n+1/2})\leq0,
\endaligned
\end{equation}
where the modified discrete version of the energy \eqref{section1_energy} is defined by
\begin{equation*}
\aligned
E_{NAEV-CN}^{n}=\frac12(\mathcal{L}\phi^{n},\phi^{n})+|r^{n}|^2.
\endaligned
\end{equation*}
\end{theorem}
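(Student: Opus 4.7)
The plan is to mimic the proof of Theorem~\ref{section2_th_dsav}, testing the three equations of~\eqref{section2_dsav12} against $\Delta t\mu^{n+1/2}$, $(\phi^{n+1}-\phi^n)$, and $\Delta t(r^{n+1}+r^n)$ respectively, and to arrange the resulting identities so that the nonlinear cross term (the one carrying $U(\tilde\phi^{n+1/2})/(2i\sqrt{E_0-E_1(\tilde\phi^{n+1/2})+\kappa})$) cancels between the second and third equations. The key observation is that the imaginary unit $i$ sits in exactly the same spot in both the $\mu$-equation and the $r$-update, so it will drop out of that cancellation and no complex arithmetic will survive in the final inequality.

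First, I would pair the first equation with $\Delta t\mu^{n+1/2}$ to obtain $(\phi^{n+1}-\phi^n,\mu^{n+1/2})=\Delta t(\mathcal{G}\mu^{n+1/2},\mu^{n+1/2})$, which is non-positive since $\mathcal{G}$ is non-positive. Next, I would test the second equation against $(\phi^{n+1}-\phi^n)$ and apply the Crank--Nicolson identity $(\mathcal{L}\tfrac{x+y}{2},x-y)=\tfrac12\|x\|_{\mathcal{L}}^2-\tfrac12\|y\|_{\mathcal{L}}^2$ to extract the $\tfrac12\|\phi^{n+1}\|_{\mathcal{L}}^2-\tfrac12\|\phi^n\|_{\mathcal{L}}^2$ increment, leaving the cross term isolated. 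Then I would multiply the third equation by $\Delta t(r^{n+1}+r^n)$ so that its left-hand side collapses to $(r^{n+1})^2-(r^n)^2$ via the difference-of-squares identity, while its right-hand side reproduces precisely the same cross term as in the previous step. Combining these three identities, the cross terms annihilate and the claimed dissipation $E_{NAEV-CN}^{n+1}-E_{NAEV-CN}^{n}=\Delta t(\mathcal{G}\mu^{n+1/2},\mu^{n+1/2})\leq 0$ should drop out.

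The main obstacle is bookkeeping around the imaginary character of $r$: because $r$ is defined by~\eqref{section2_dsav_r2} to be purely imaginary, the increment $(r^{n+1})^2-(r^n)^2$ carries the opposite sign to what would appear in the real-valued SAV computation, and this sign flip is precisely what accounts for the $+|r^n|^2$ (rather than $-|r^n|^2$) in the modified energy $E_{NAEV-CN}^n$. I therefore need to be careful to interpret $|r^n|^2$ as the formal square $(r^n)^2$ produced by the manipulation rather than the complex modulus; once this identification is in place, the argument is otherwise structurally identical to that of Theorem~\ref{section2_th_dsav}, and in particular requires no restriction on $\Delta t$ and no lower bound on $E_1$.
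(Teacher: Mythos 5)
Your proof is correct and follows essentially the same route the paper itself indicates (and carries out in detail only for Theorem~\ref{section2_th_dsav}): pair the three equations with $\Delta t\mu^{n+1/2}$, $\phi^{n+1}-\phi^n$ and $\Delta t(r^{n+1}+r^n)$, apply the Crank--Nicolson identity $(\mathcal{L}\tfrac{x+y}{2},x-y)=\tfrac12\|x\|_{\mathcal{L}}^2-\tfrac12\|y\|_{\mathcal{L}}^2$, and let the common cross term be exchanged for $(r^{n+1})^2-(r^n)^2$. Your caveat is exactly the right one: since every $r^n$ is purely imaginary, the symbol $|r^n|^2$ in $E_{NAEV-CN}^{n}$ must be read as the formal square $(r^n)^2$ (which equals minus the complex modulus squared), so that the ``$+|r^n|^2$'' energy agrees in value with the $-|r^n|^2$ energy of Theorem~\ref{section2_th_dsav2} and the scheme is only SAV-like \emph{in form}.
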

\section{Numerical experiments}
In this section, we present some numerical examples for some classical gradient flows such as Allen-Cahn model, Cahn-Hilliard model, phase field crystal model and Swift-Hohenberg model in 2D to test our theoretical analysis which contain energy stability and convergence rates of the proposed numerical schemes. We use the finite difference method for spatial discretization for all numerical examples. In both first-order and second-order NAEV scheme, we set $\kappa=0$.

\subsection{Allen-Cahn and Cahn-Hilliard models}
In this subsection, we give some numerical examples of Allen-Cahn and Cahn-Hilliard models to validate the efficiency and accuracy of the proposed schemes. The Allen-Cahn and Cahn-Hilliard models are two of classical gradient flows and has been widely studied by many scholars \cite{chen2018accurate,chen2018power,du2018stabilized}.

Consider the following Lyapunov energy functional:
\begin{equation}\label{section5_energy1}
E(\phi)=\int_{\Omega}(\frac{\epsilon^2}{2}|\nabla \phi|^2+F(\phi))d\textbf{x},
\end{equation}
where $F(\phi)$ is double-well type potential which is defined as $F(\phi)=\frac{1}{4}(\phi^2-1)^2$.

By applying the variational approach for the free energy \eqref{section5_energy1} leads to
\begin{equation}\label{section5_e_model}
  \left\{
   \begin{array}{rlr}
\displaystyle\frac{\partial \phi}{\partial t}&=\mathcal{G}\mu,     &(\textbf{x},t)\in\Omega\times J,\\
                                          \mu&=-\Delta \phi+f(\phi),&(\textbf{x},t)\in\Omega\times J,
   \end{array}
   \right.
  \end{equation}
where $J=(0,T]$, $\mu$ is the chemical potential, the parameter $\epsilon$ represents the interface width and $f(\phi)=F^{\prime}(\phi)$. $\mathcal{G}=-1$ for the Allen-Cahn model and $\mathcal{G}=\Delta$ for the Cahn-Hilliard model.

We first perform the following numerical example to test the convergence rates of the first-order and second-order NAEV scheme.

\textbf{Example 1}: Consider the Allen-Cahn and Cahn-Hilliard equations in $\Omega=(0,1)$ with $\epsilon=0.2$ and the following initial condition \cite{ainsworth2017analysis}:
\begin{equation*}
\aligned
\phi(x,y,0)=\cos(2\pi x)\cos(2\pi y).
\endaligned
\end{equation*}

In order to test the temporal numerical accuracy, we choose $h=0.01$ and different time step size $\Delta t$. Since the exact solution is unknown, the estimated convergence rate can be calculated as follows:
\begin{equation*}
\aligned
Rate=\log_2\frac{Err_{\Delta t}^{(\Delta t)/2}}{Err_{(\Delta t)/2}^{(\Delta t)/4}},
\endaligned
\end{equation*}
where $Err_{\Delta t}^{(\Delta t)/2}=\|\Phi^{\Delta t}-\Phi^{(\Delta t)/2}\|_2$ and $\Phi^{\Delta t}$ is the numerical result vector at $T$ with the time step $\Delta t$.
\begin{table}[h!b!p!]
\small
\renewcommand{\arraystretch}{1.1}
\centering
\caption{\small The $L_2$ errors, convergence rates of Allen-Cahn and Cahn-Hilliard equations for the first-order NAEV scheme in time at $T=8e-3$.}\label{tab:tab1}
\begin{tabular}{cccccccccccc}
\hline
        &      &        &Allen-Cahn-Equ&        &&       &Cahn-Hilliard-Equ&\\
\cline{2-5}\cline{7-10}
$\Delta t$   &$4e-4$  &$2e-4$  &$1e-4$    &$5e-5$   &&$4e-4$  &$2e-4$  &$1e-4$    &$5e-5$\\
\hline
Err          &4.90e-6 &2.45e-6   &1.22e-6  &6.12e-7  &&3.24e-3 &1.68e-3 &8.58e-4   &4.32e-4\\
Rate         &-       &1.00      &1.00     &1.00     &&-       &0.95    &0.97      &0.99\\
\hline
\end{tabular}
\end{table}
\begin{table}[h!b!p!]
\small
\renewcommand{\arraystretch}{1.1}
\centering
\caption{\small The $L_2$ errors, convergence rates of Allen-Cahn and Cahn-Hilliard equations for the second-order NAEV-CN scheme in time at $T=8e-3$.}\label{tab:tab2}
\begin{tabular}{cccccccccccc}
\hline
        &      &        &Allen-Cahn-Equ&        &&       &Cahn-Hilliard-Equ&\\
\cline{2-5}\cline{7-10}
$\Delta t$   &$4e-4$  &$2e-4$    &$1e-4$   &$5e-5$   &&$4e-4$  &$2e-4$  &$1e-4$    &$5e-5$\\
\hline
Err          &2.47e-9 &6.20e-10   &1.55e-10  &3.89e-11 &&1.18e-4 &2.65e-5 &6.06e-6   &1.42e-6\\
Rate         &-       &1.99      &2.00     &1.99     &&-       &2.15    &2.12      &2.09\\
\hline
\end{tabular}
\end{table}

We consider the NAEV-CN scheme to study the phase separation behavior by using the following example.

\textbf{Example 2}: In the following, we take $\Omega=[0,1]\times[0,1]$, $\epsilon=0.01$. The initial condition is chosen as
\begin{equation*}
\aligned
\phi_0(x,y,0)=1-\tanh\frac{\sqrt{(x-0.3)^2+(y-0.5)^2}-R_0}{\sqrt{2}\epsilon}-\tanh\frac{\sqrt{(x-0.7)^2+(y-0.5)^2}-R_0}{\sqrt{2}\epsilon}.
\endaligned
\end{equation*}
with the radius $R_0=0.19$. Initially, two bubbles, centered at $(0.3,0.5)$ and $(0.7,0.5)$, respectively, are osculating.

\begin{figure}[htp]
\centering
\subfigure[t=0]{
\includegraphics[width=3.5cm,height=3.5cm]{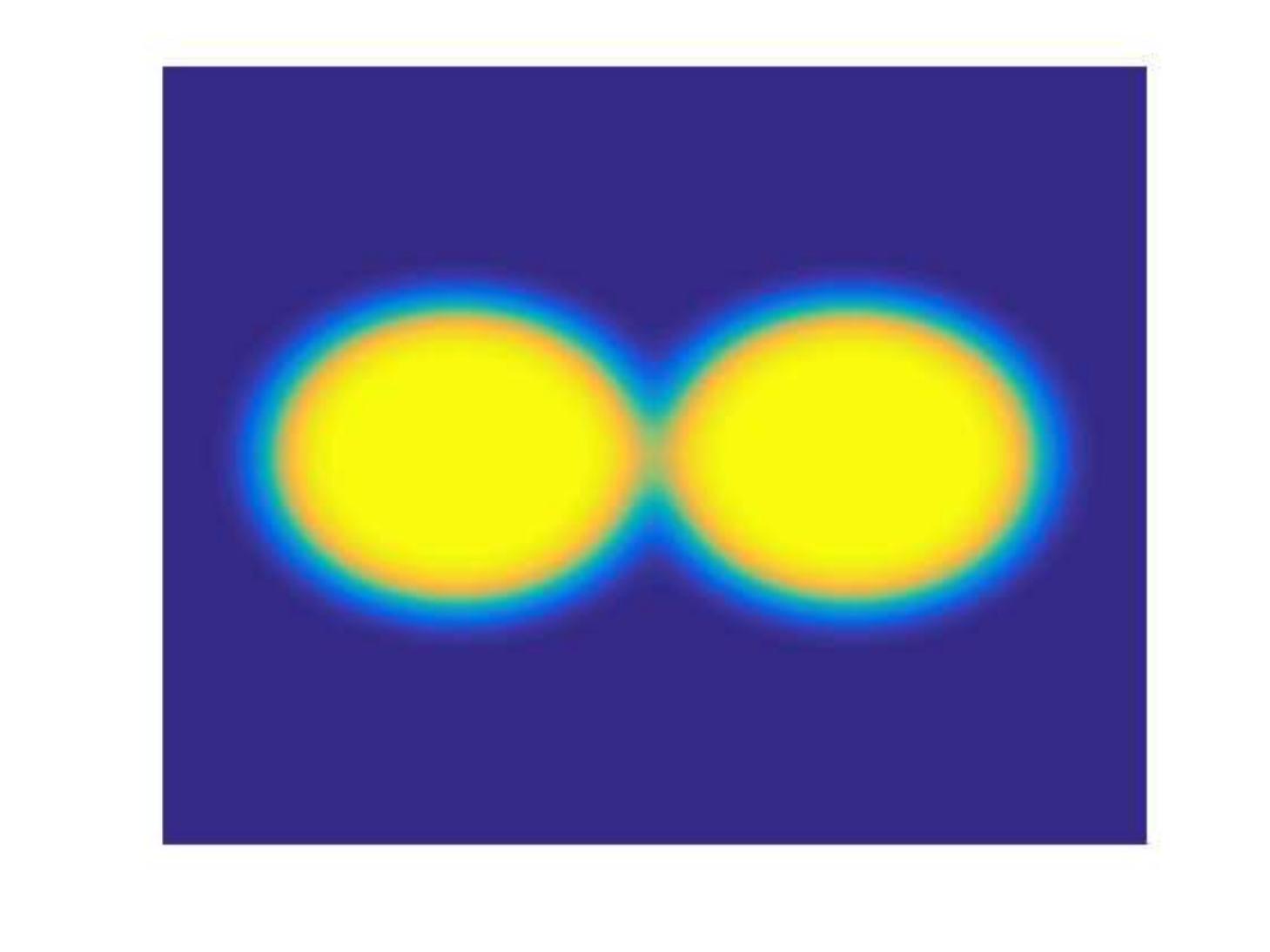}
}
\subfigure[t=1]
{
\includegraphics[width=3.5cm,height=3.5cm]{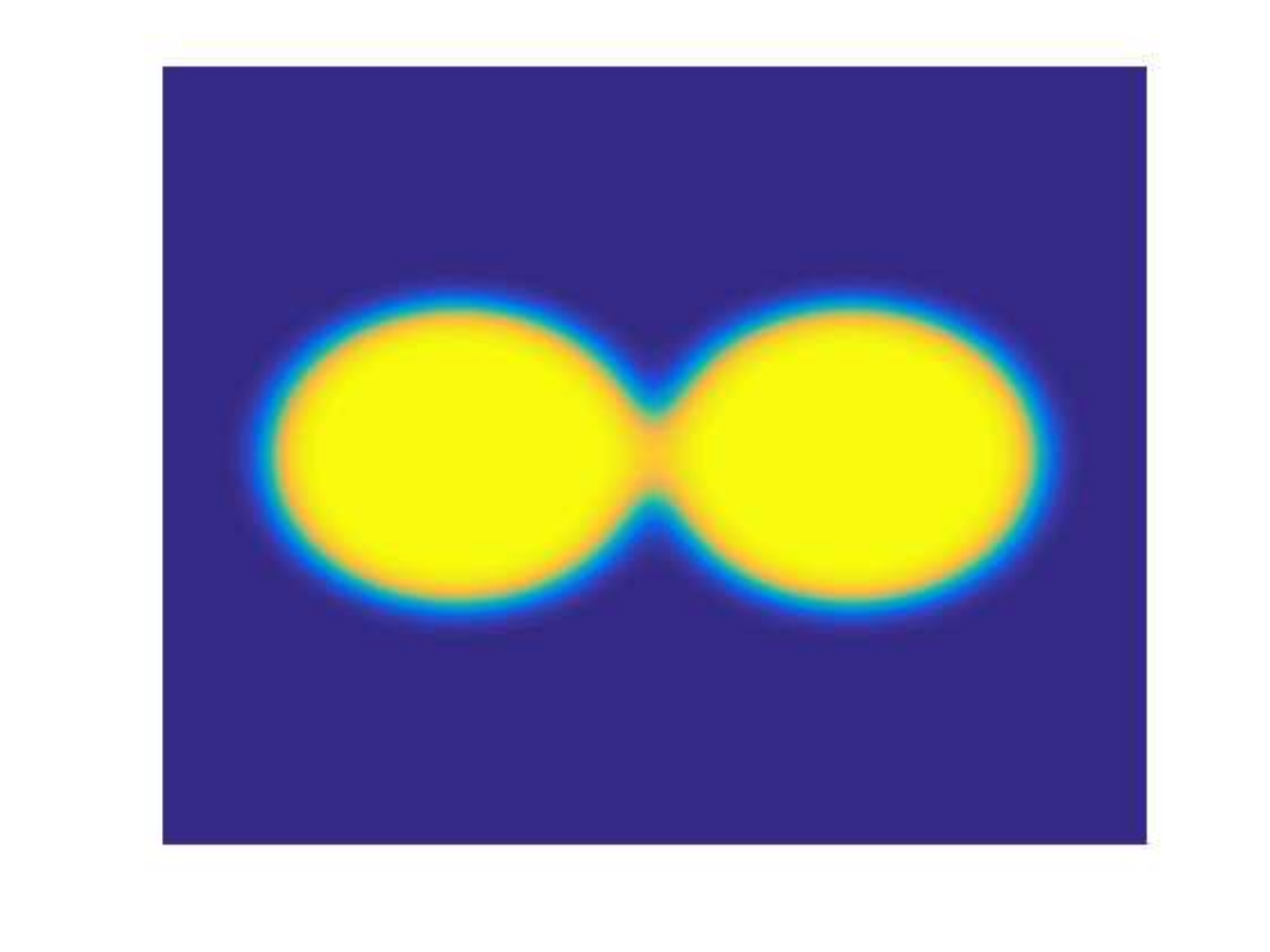}
}
\subfigure[t=3]
{
\includegraphics[width=3.5cm,height=3.5cm]{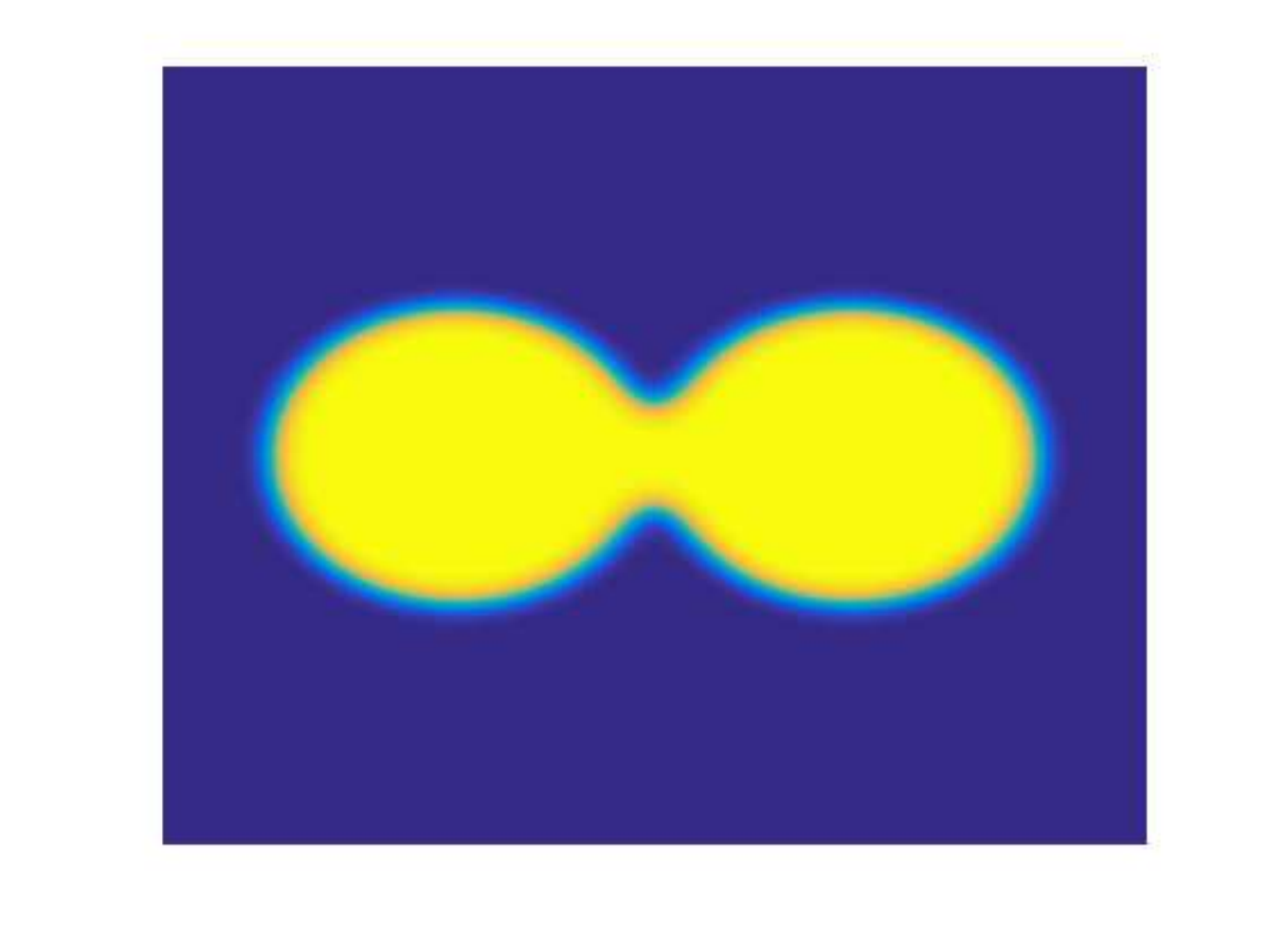}
}
\subfigure[t=14]
{
\includegraphics[width=3.5cm,height=3.5cm]{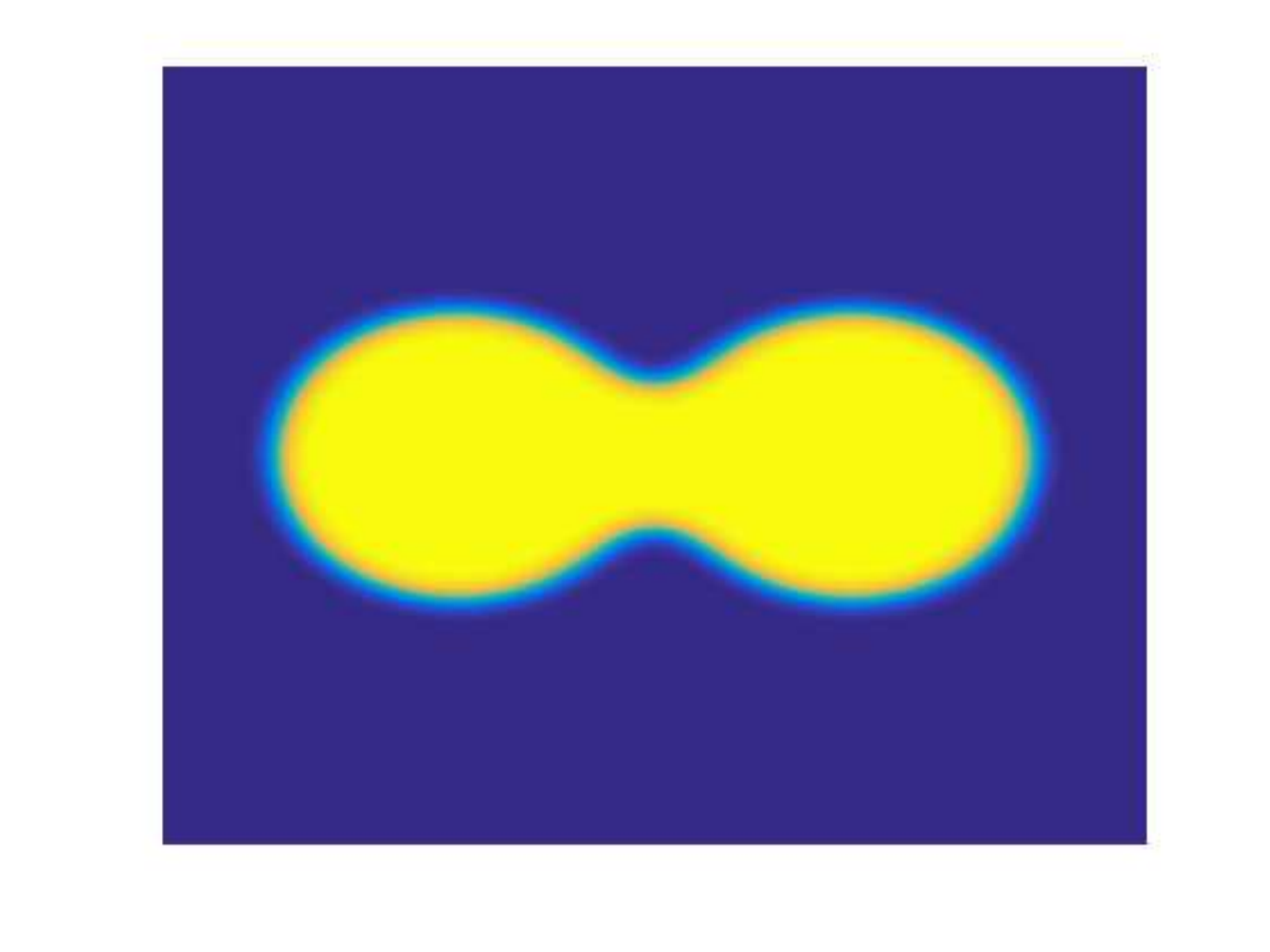}
}
\quad
\subfigure[t=60]{
\includegraphics[width=3.5cm,height=3.5cm]{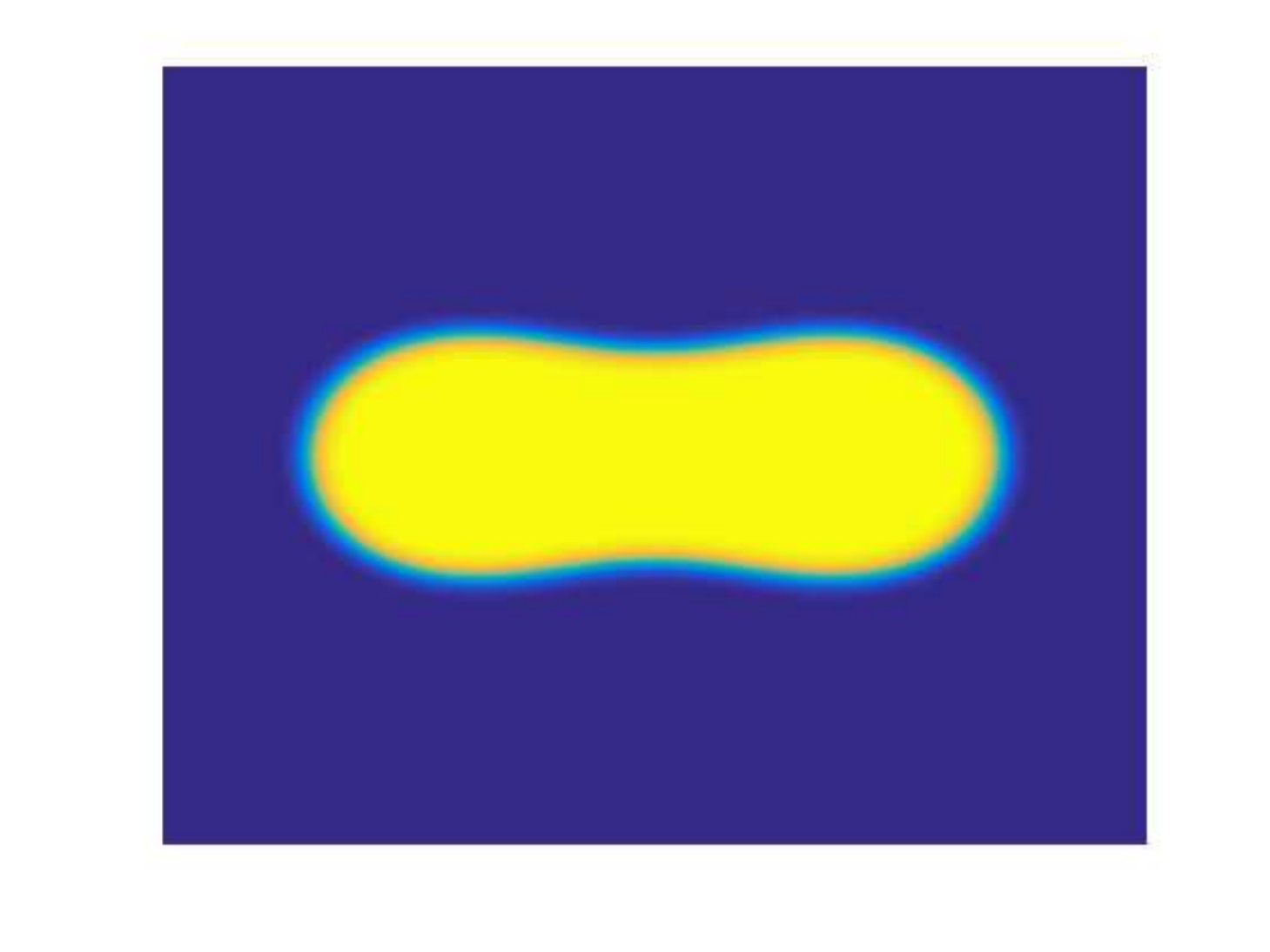}
}
\subfigure[t=130]
{
\includegraphics[width=3.5cm,height=3.5cm]{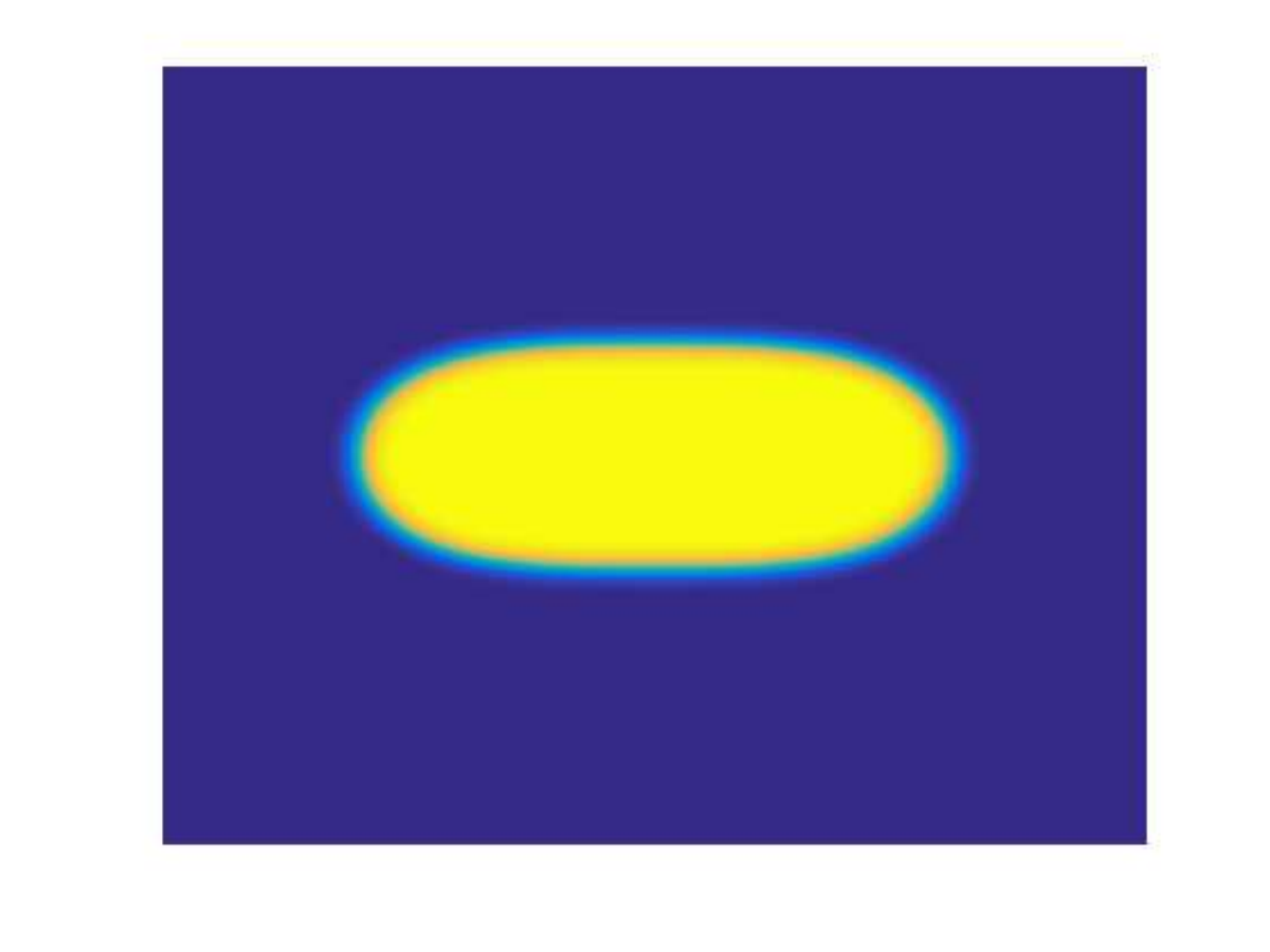}
}
\subfigure[t=240]
{
\includegraphics[width=3.5cm,height=3.5cm]{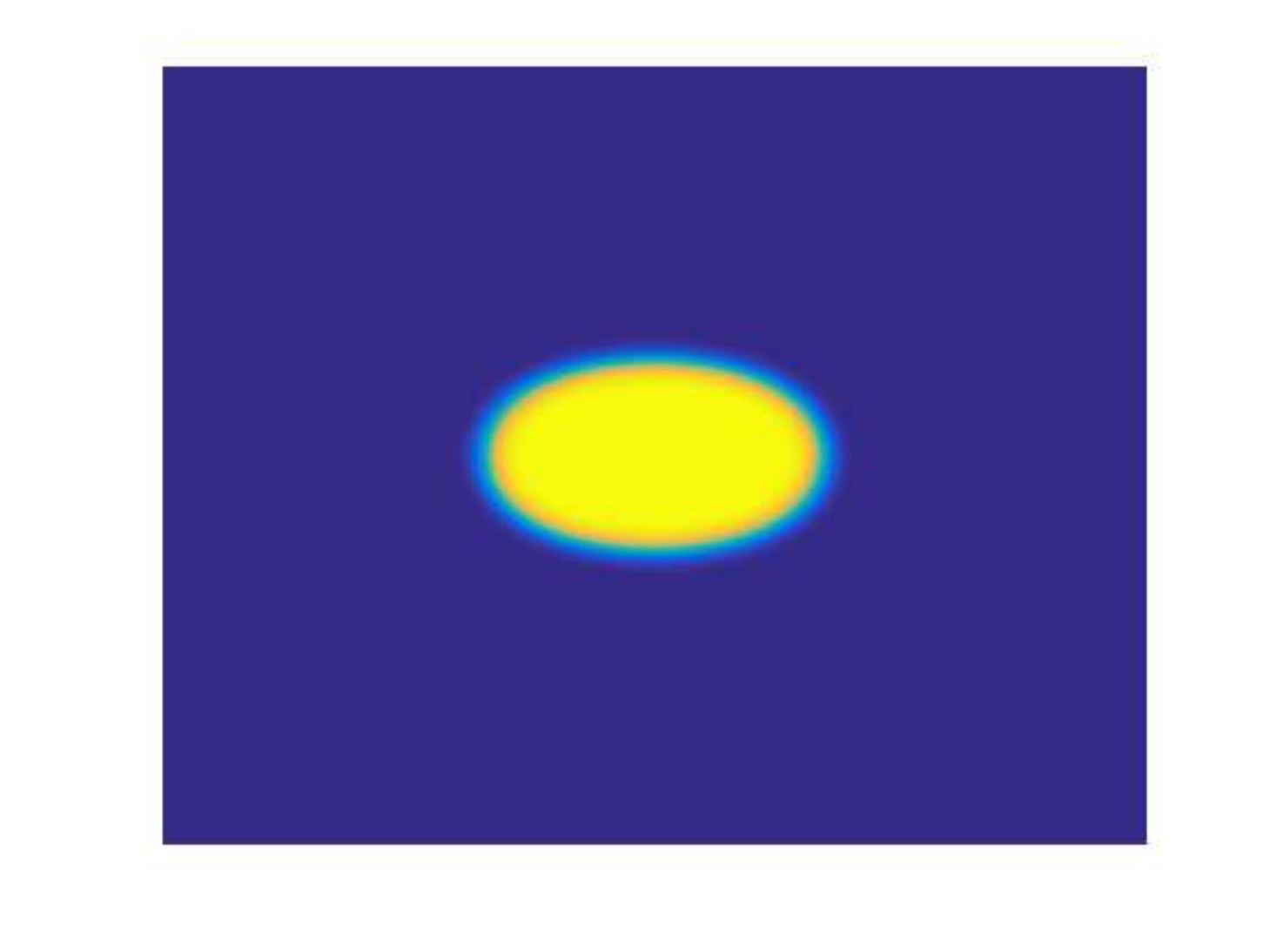}
}
\subfigure[t=340]
{
\includegraphics[width=3.5cm,height=3.5cm]{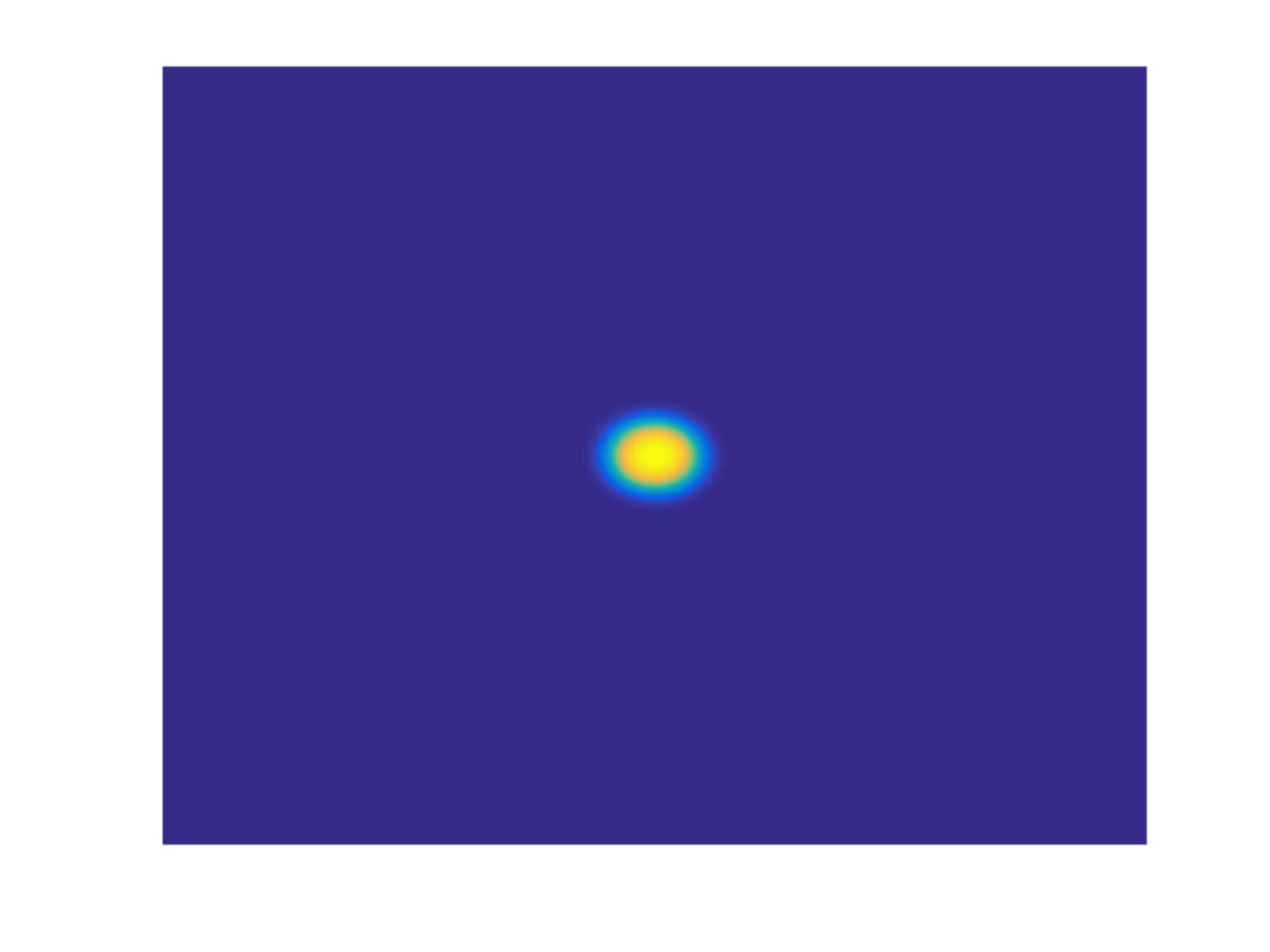}
}
\caption{Snapshots of the phase variable $\phi$ for Allen-Cahn Model are taken at t=0, 1, 3, 14, 60, 130, 240, 340. The time step is $\Delta t=0.1$.}\label{ac-fig1}
\end{figure}

\begin{figure}[htp]
\centering
\subfigure[t=0]{
\includegraphics[width=3.5cm,height=3.5cm]{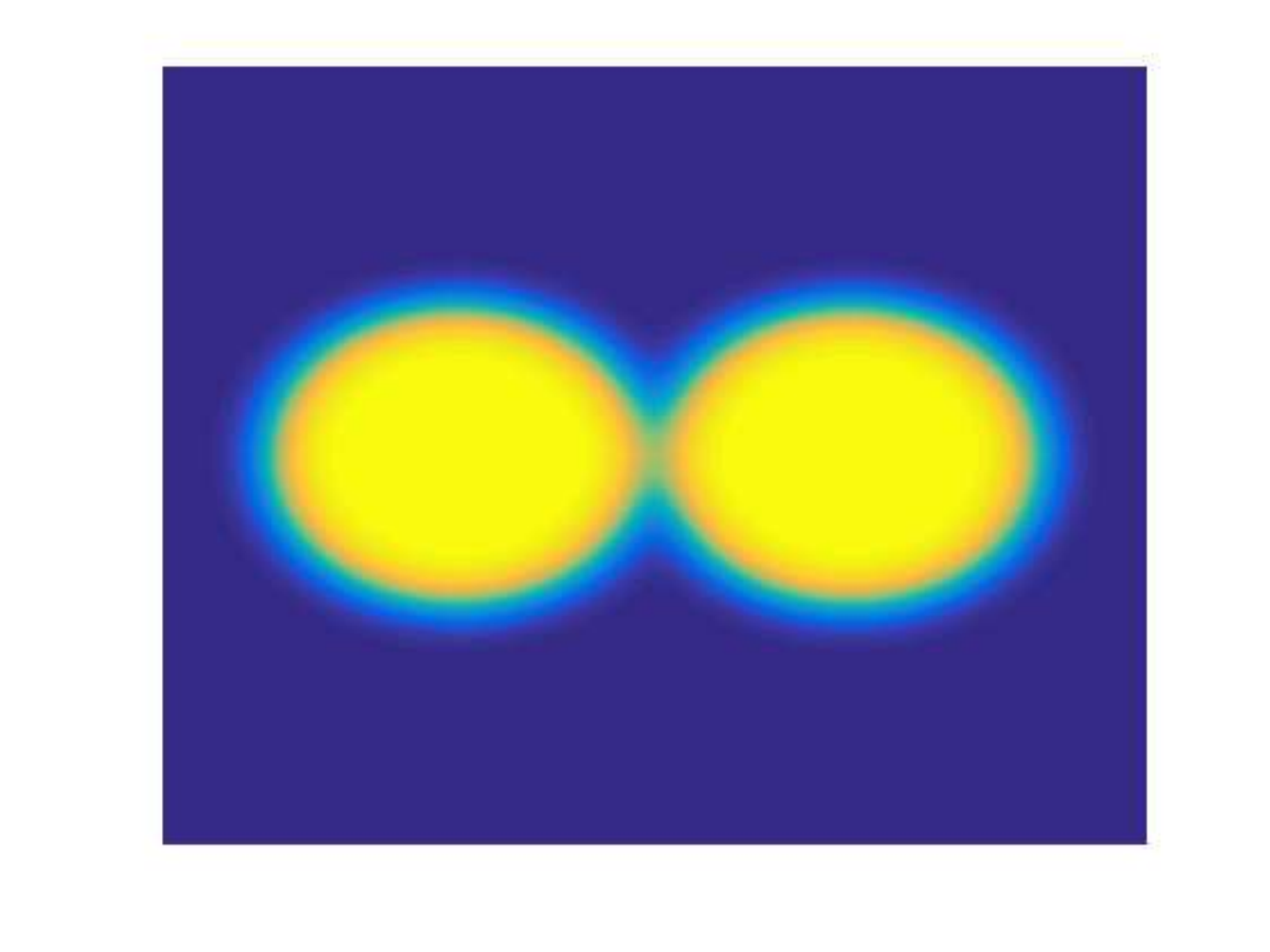}
}
\subfigure[t=8e-4]
{
\includegraphics[width=3.5cm,height=3.5cm]{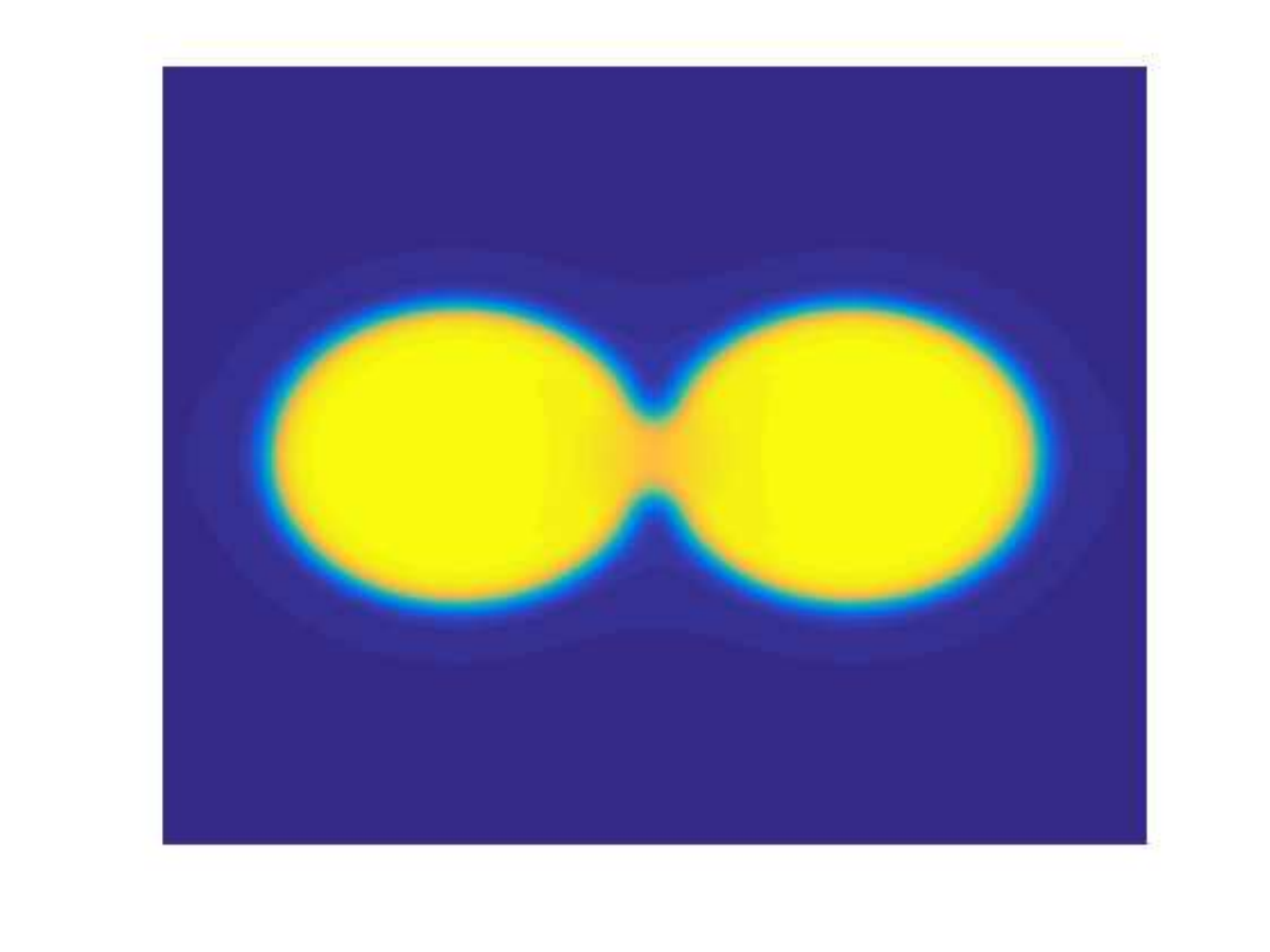}
}
\subfigure[t=4e-3]
{
\includegraphics[width=3.5cm,height=3.5cm]{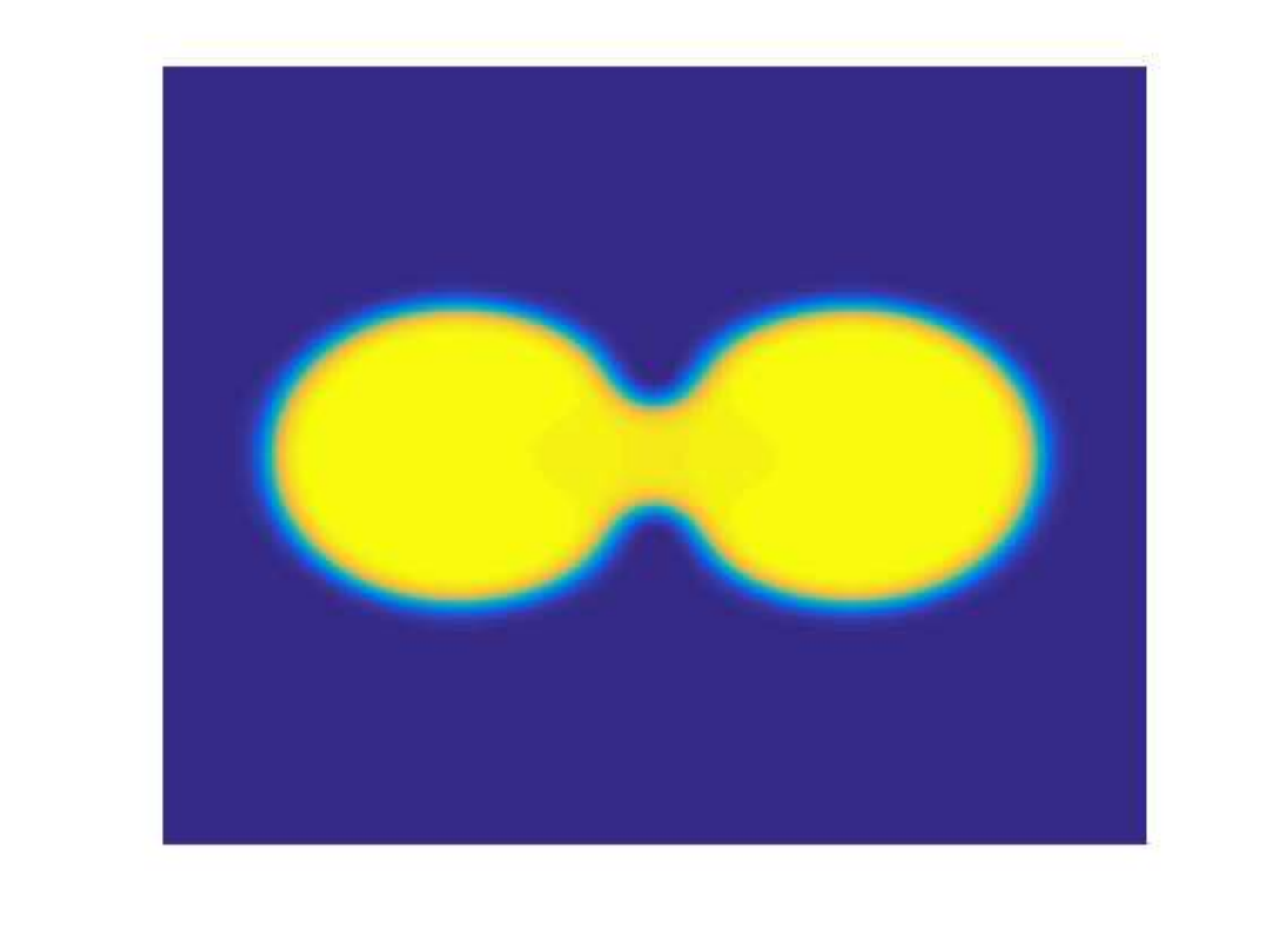}
}
\subfigure[t=0.02]
{
\includegraphics[width=3.5cm,height=3.5cm]{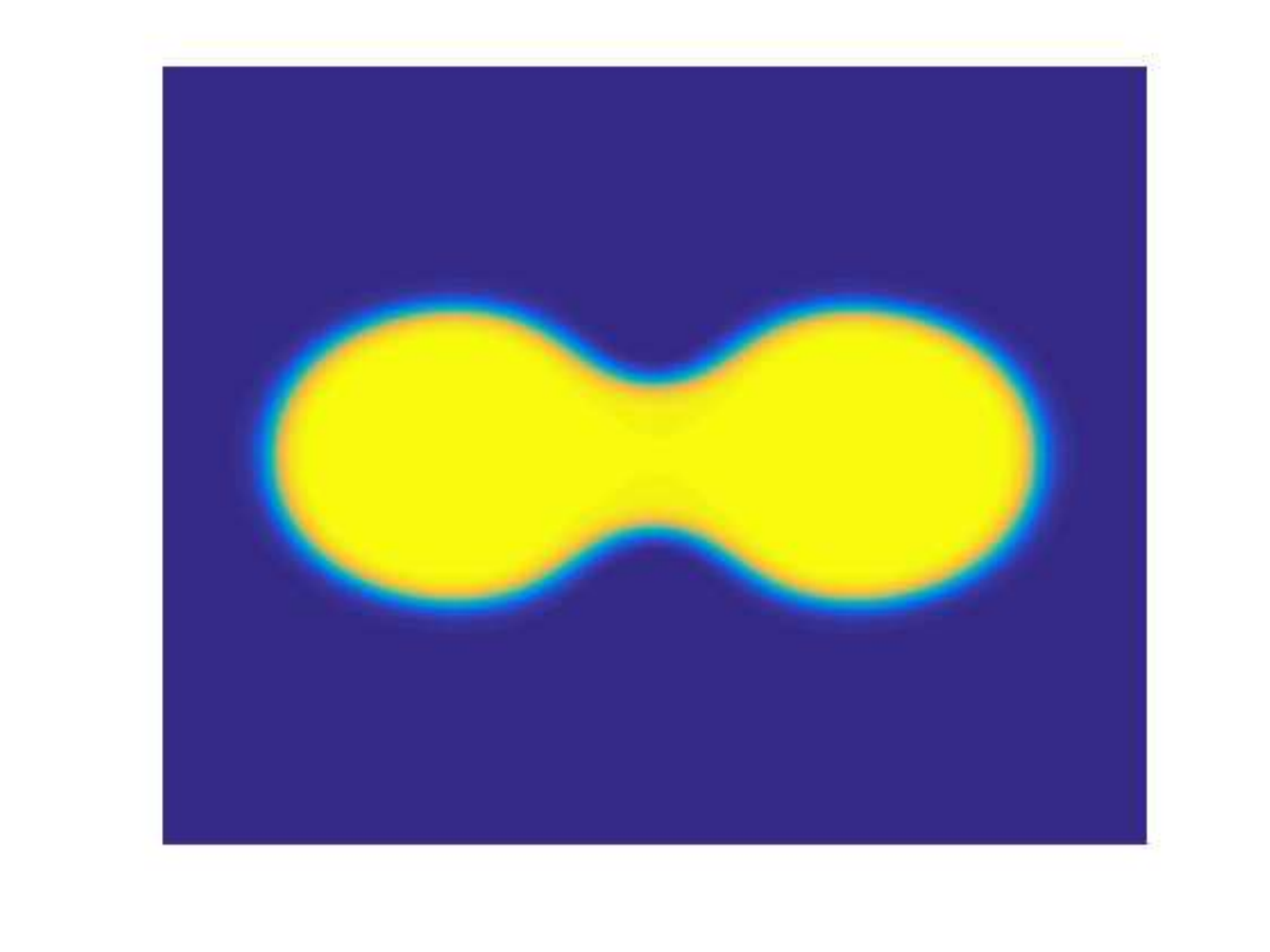}
}
\quad
\subfigure[t=0.16]{
\includegraphics[width=3.5cm,height=3.5cm]{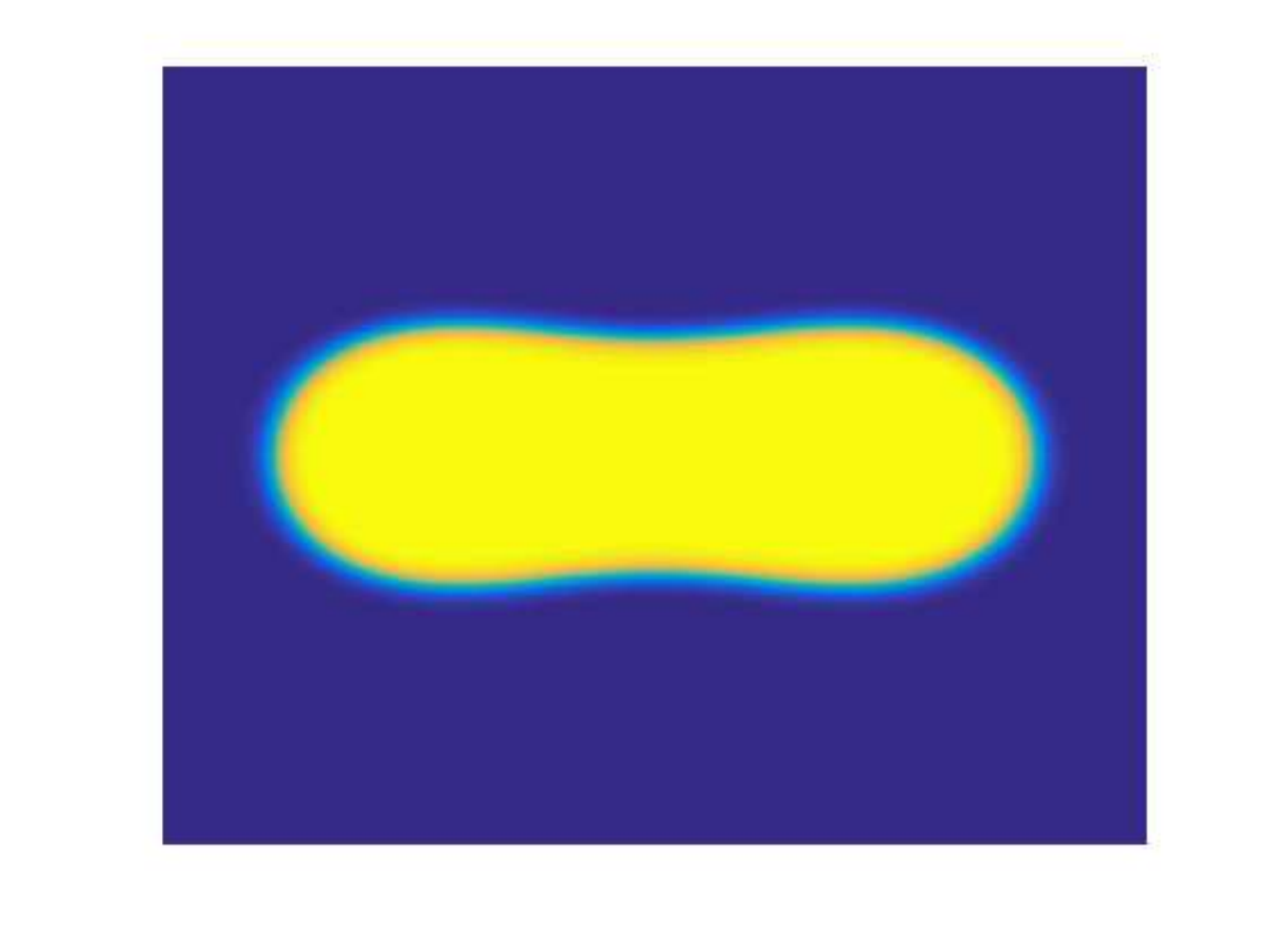}
}
\subfigure[t=0.4]
{
\includegraphics[width=3.5cm,height=3.5cm]{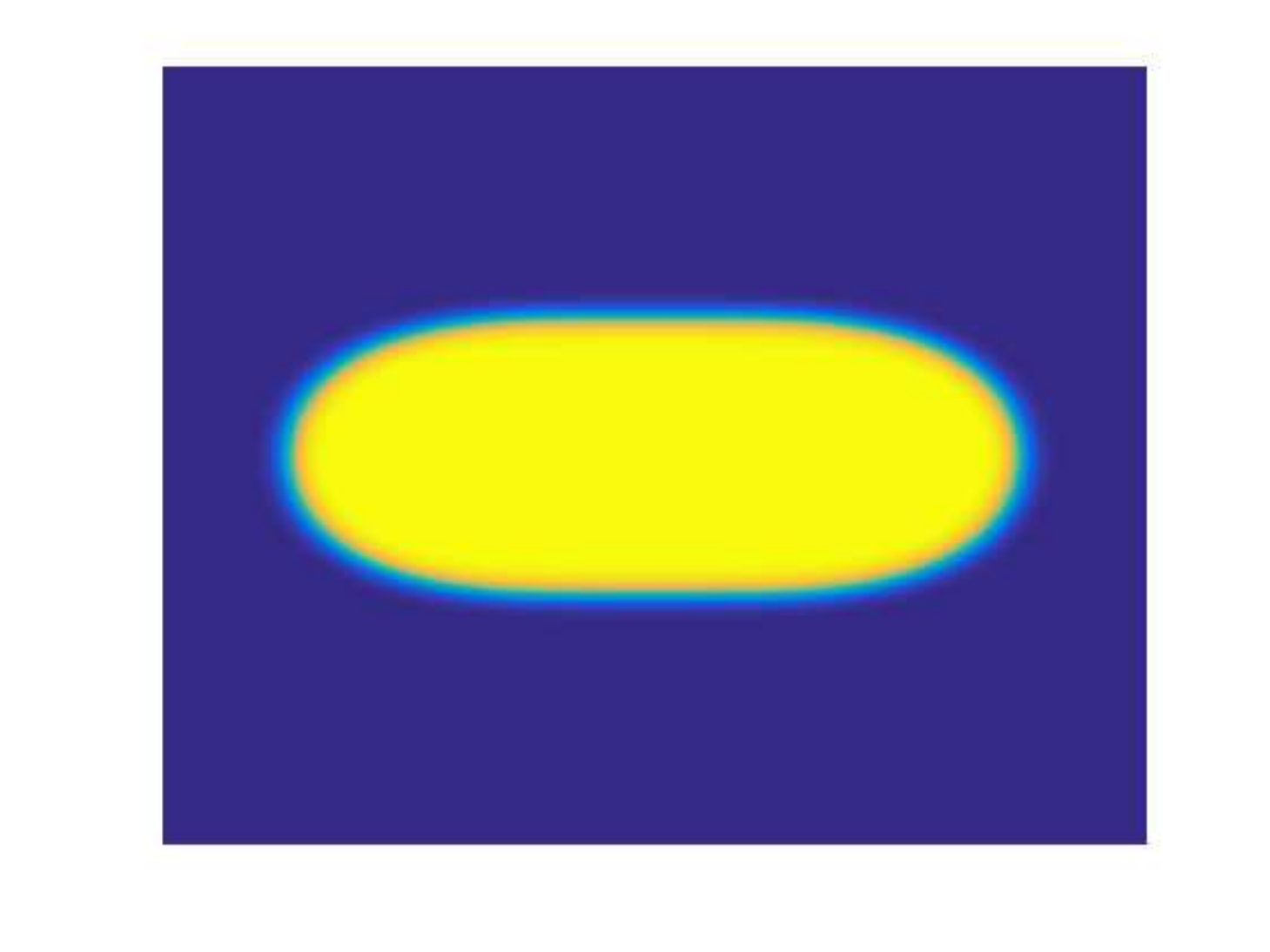}
}
\subfigure[t=0.8]
{
\includegraphics[width=3.5cm,height=3.5cm]{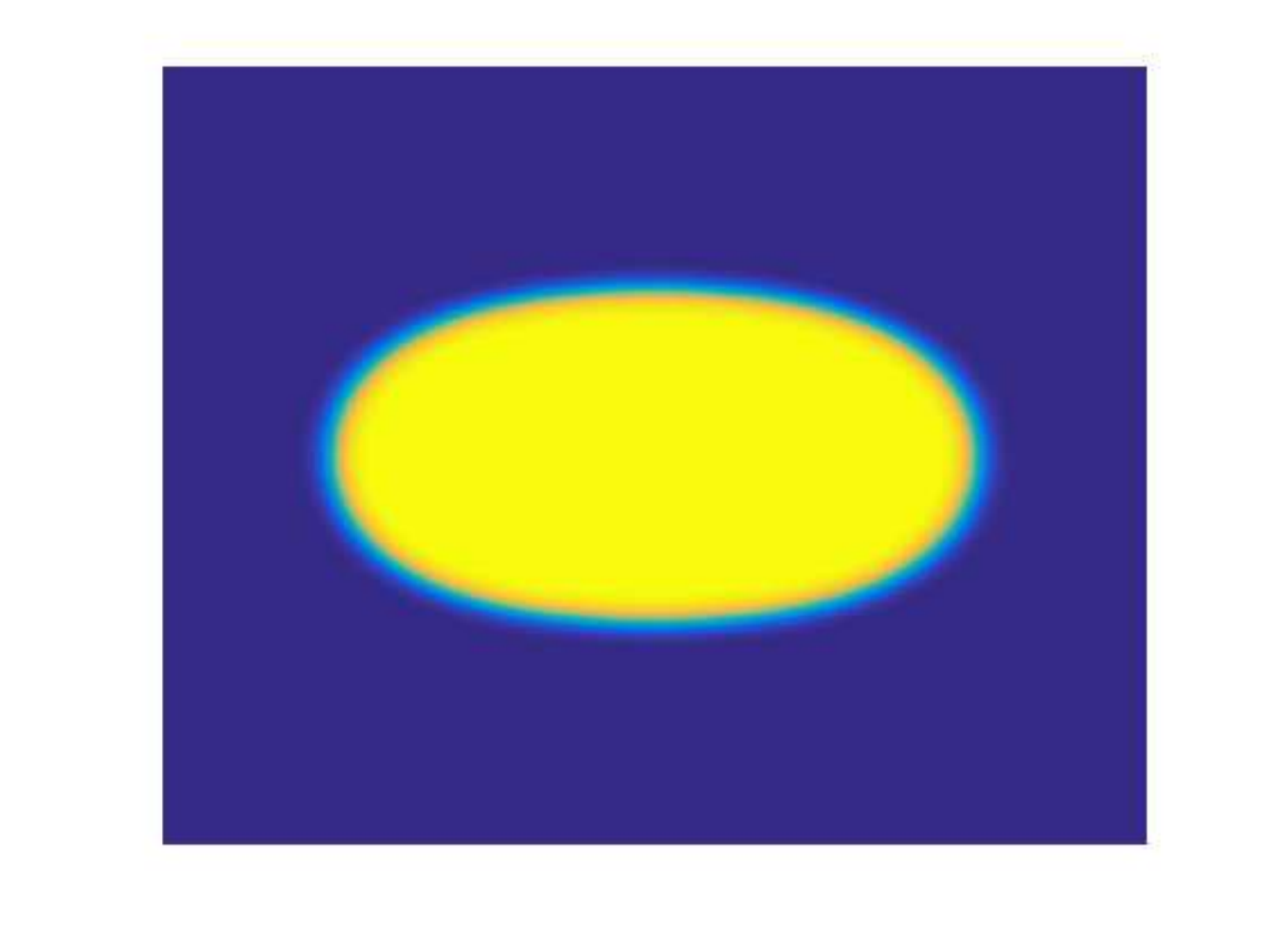}
}
\subfigure[t=1]
{
\includegraphics[width=3.5cm,height=3.5cm]{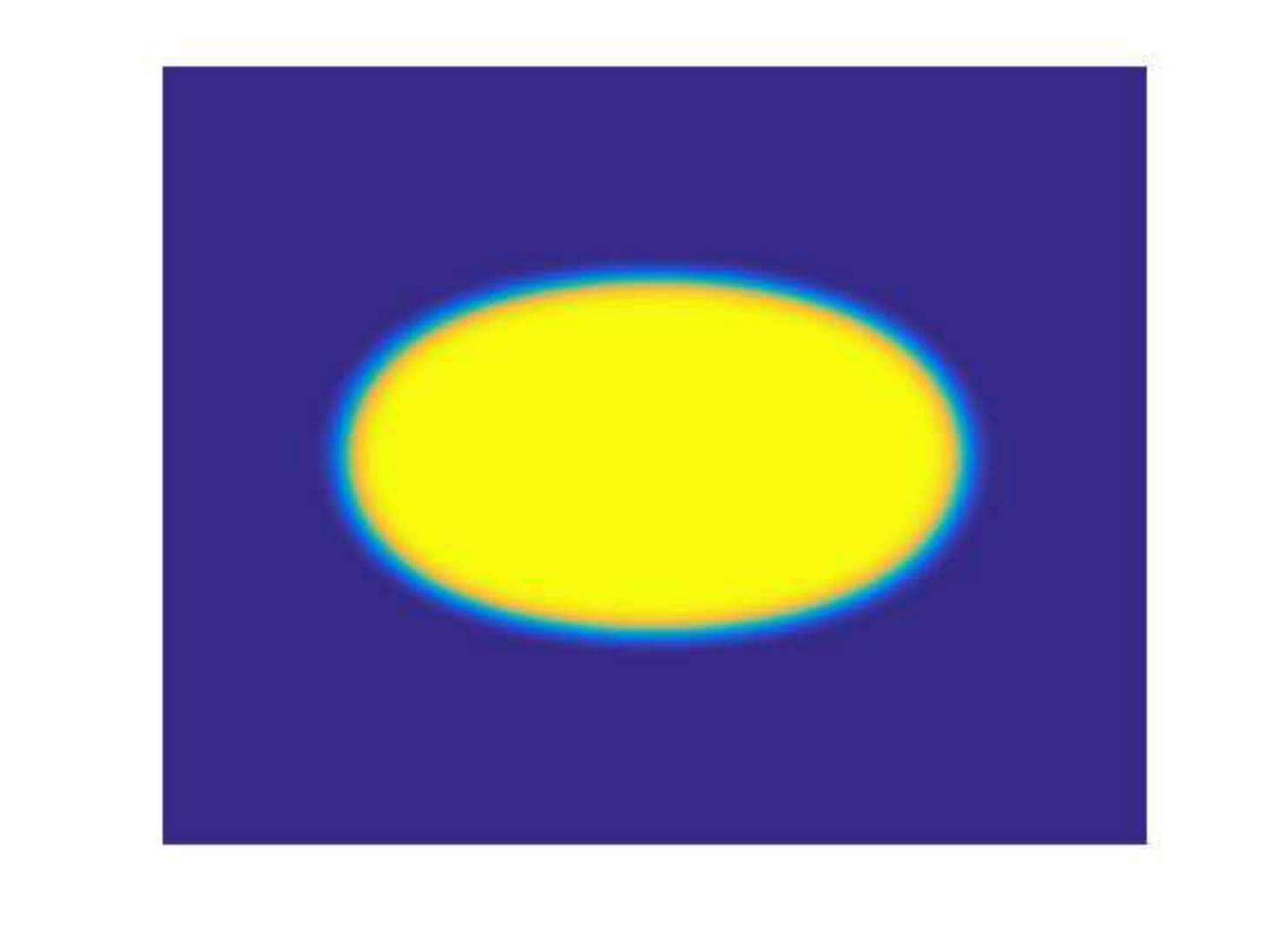}
}
\caption{Snapshots of the phase variable $\phi$ for Cahn-Hilliard Model are taken at t=0, $8e-4$, $4e-3$, $0.02$, $0.16$, $0.4$, $0.8$ and $1$ for example 2.}\label{ch-fig1}
\end{figure}

As is known to all, the Allen-Cahn equation does not conserve mass and the Cahn-Hilliard model preserves mass. So, in Figure \ref{ac-fig1}, we can see that the bubble shrinks and finally disappears. From Figure \ref{ch-fig1}, we see that as time evolves, the two bubbles coalesce into a single bubble. Finally, the shape of the bubble reaches a steady circular shape. A correct simulation of this phenomenon shows the effectiveness of our NAEV method.

\subsection{The phase field crystal model}
In the following example,  We first test convergence rates of the proposed schemes for the phase field crystal equation in two dimension and check the efficiency and accuracy. Then, we study the phase transition behavior of the density field. The phase field crystal model can be written as follows:
\begin{equation*}
\frac{\partial \phi}{\partial t}=\Delta\left(\phi^3-\epsilon\phi+(1+\Delta)^2\phi\right), \quad(\textbf{x},t)\in\Omega\times Q,
\end{equation*}
The relative Swift-Hohenberg free energy takes the form:
\begin{equation*}
E(\phi)=\int_{\Omega}\left(\frac{1}{4}\phi^4+\frac{1}{2}\phi\left(-\epsilon+(1+\Delta)^2\right)\phi\right)d\textbf{x},
\end{equation*}
where $\textbf{x} \in \Omega \subseteq \mathbb{R}^d$, $\phi$ is the density field and $\epsilon$ is a positive bifurcation constant with physical significance. $\Delta$ is the Laplacian operator.

\textbf{Example 3}: Consider the following initial condition
\begin{equation*}
\aligned
\phi(x,y,0)=\sin(\frac{2\pi x}{64})\sin(\frac{2\pi y}{64}).
\endaligned
\end{equation*}

The computational domain is set to be $\Omega=[0,128]\times[0,128]$ and the order parameters are $\epsilon=0.25$, $T=1$.

We list the $L^2$ errors and temporal convergence rates of the phase variable for both first-order and second-order NAEV schemes. We find that our NAEV schemes can all achieve almost perfect first and second order accuracy in time.
\begin{table}[h!b!p!]
\small
\renewcommand{\arraystretch}{1.1}
\centering
\caption{\small The $L_2$ errors, convergence rates of phase field crystal equation for the first-order and second-order NAEV schemes in time at $T=1$.}\label{tab:tab3}
\begin{tabular}{cccccccccccc}
\hline
        &      &        &1st-Order NAEV&        &&       &2nd-Order NAEV&\\
\cline{2-5}\cline{7-10}
$\Delta t$   &$1/2^4$  &$1/2^5$    &$1/2^6$   &$1/2^7$   &&$1/2^4$  &$1/2^5$  &$1/2^6$    &$1/2^7$\\
\hline
Err          &7.65e-3 &3.82e-3   &1.91e-3  &9.52e-4 &&3.45e-4 &8.69e-5 &2.18e-5   &5.46e-6\\
Rate         &-       &1.00      &1.00     &1.00     &&-       &1.99    &2.00      &2.00\\
\hline
\end{tabular}
\end{table}

\textbf{Example 4}: In the following, we take the order parameter is $\epsilon=0.025$, $\Omega=[-50,50]^2$, $\tau=1/10$. The initial condition is
\begin{equation}\label{section5_e2}
\aligned
&\phi_0(x,y)=0.07+0.07\times rand(x,y),
\endaligned
\end{equation}
where the $rand(x,y)$ is the random number in $[-1,1]$ with zero mean.

In this example, we show the phase transition behavior of the density field for different values at various times in Figure \ref{fig1}. Similar computation results for phase field crystal model can be found in \cite{yang2017linearly} which verifies the efficiency of our NAEV approach.
\begin{figure}[htp]
\centering
\subfigure[t=4]{
\includegraphics[width=4cm,height=4cm]{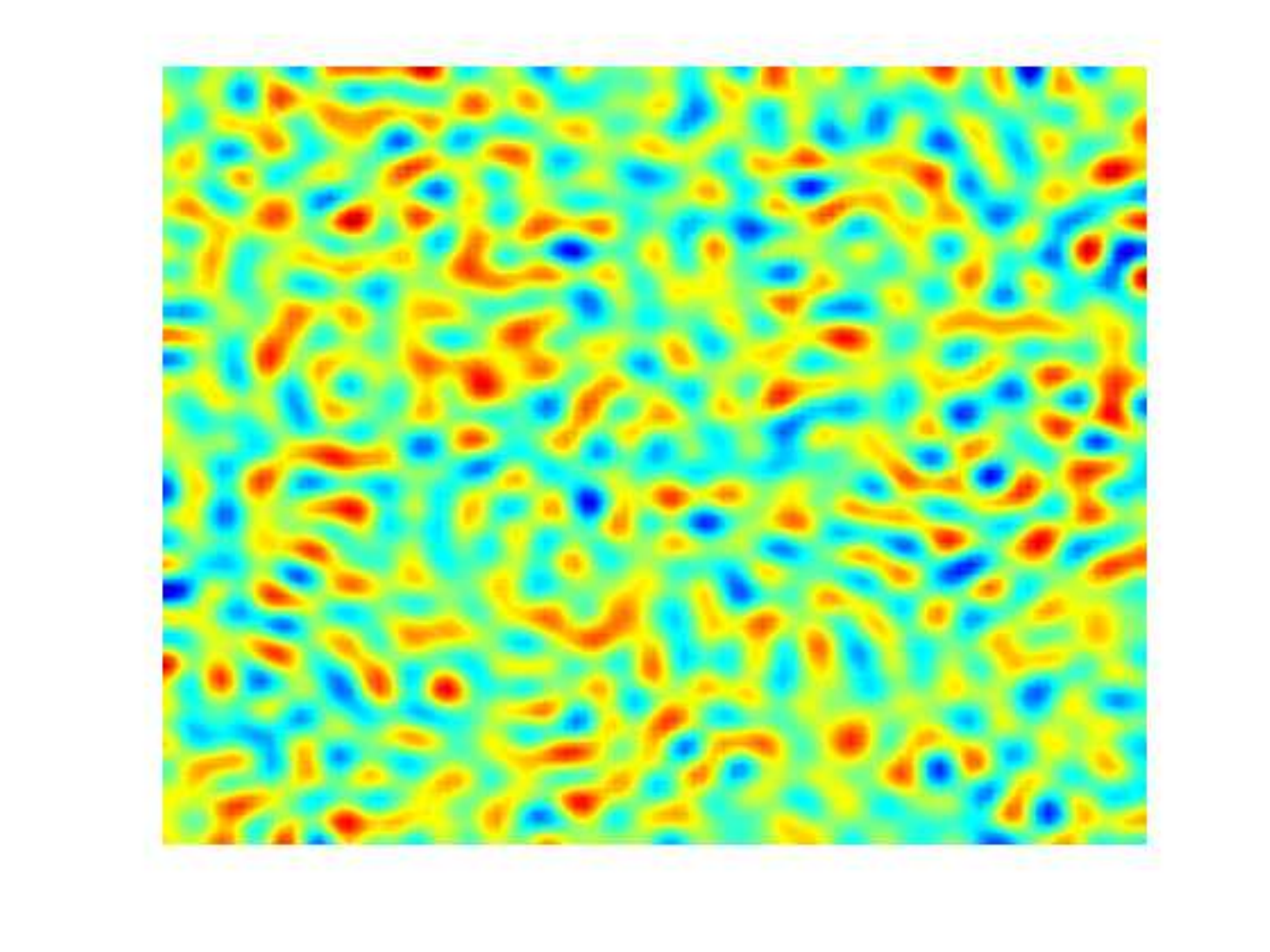}
}
\subfigure[t=15]
{
\includegraphics[width=4cm,height=4cm]{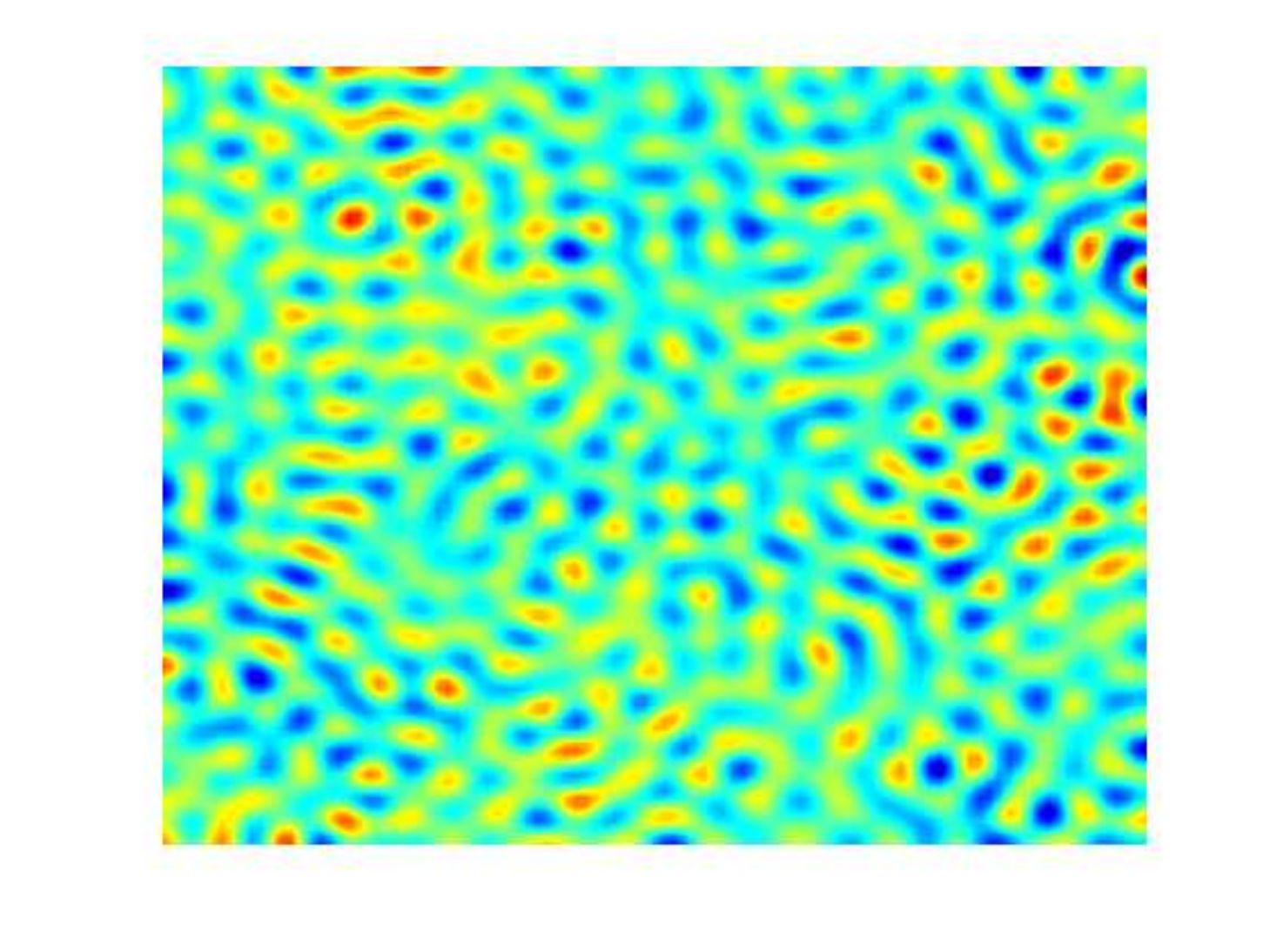}
}
\subfigure[t=100]
{
\includegraphics[width=4cm,height=4cm]{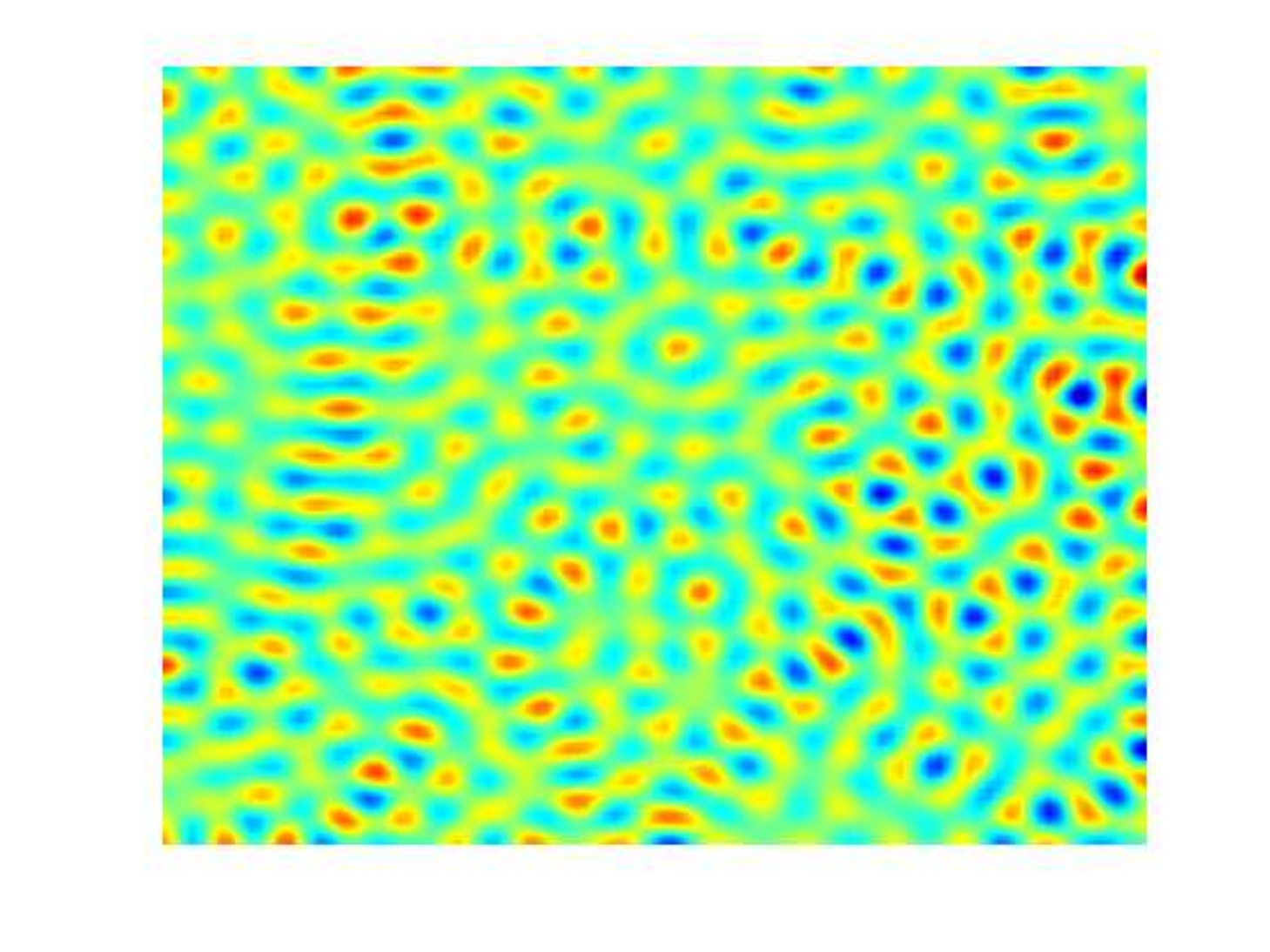}
}
\quad
\subfigure[t=200]{
\includegraphics[width=4cm,height=4cm]{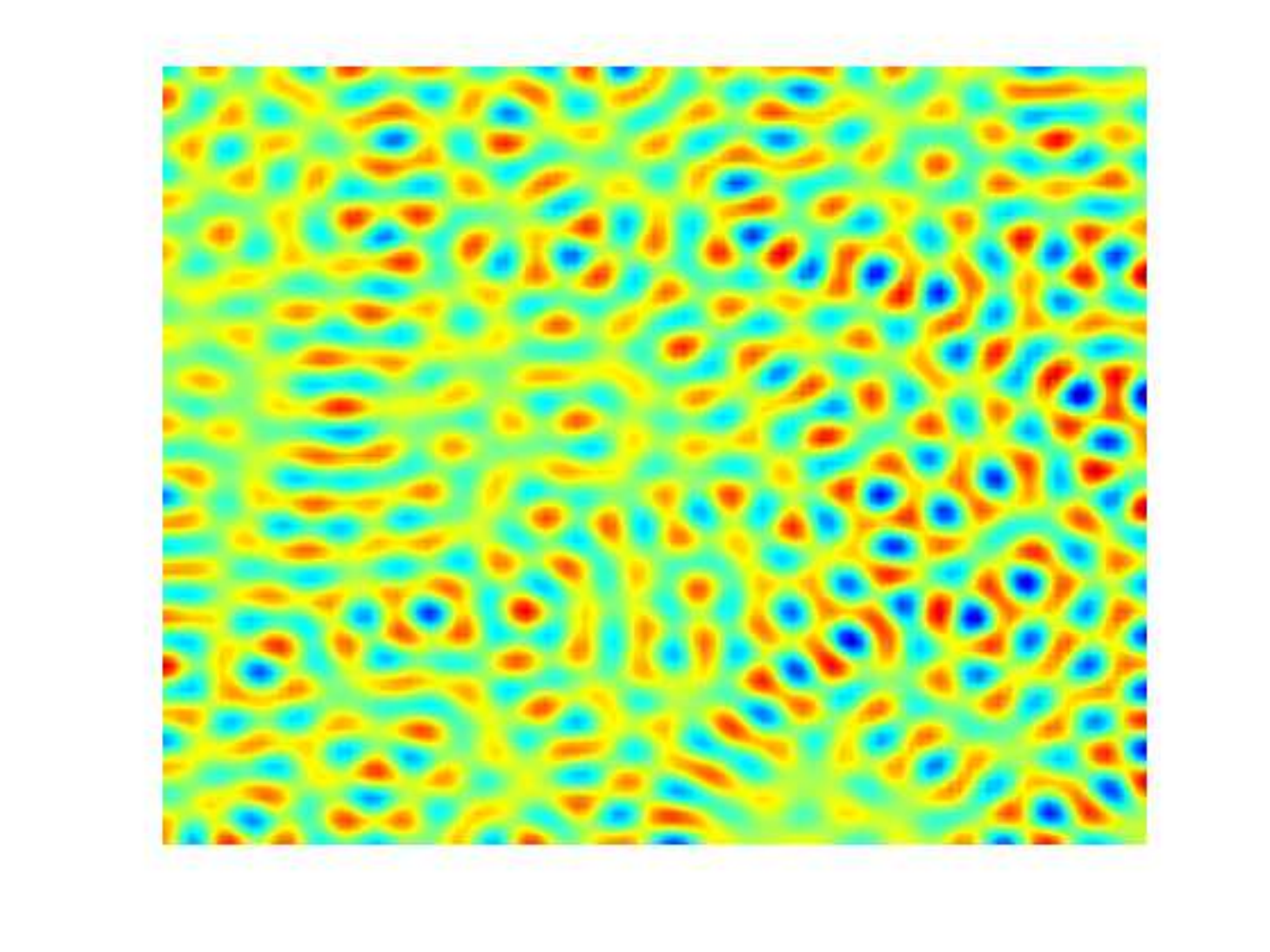}
}
\subfigure[t=400]
{
\includegraphics[width=4cm,height=4cm]{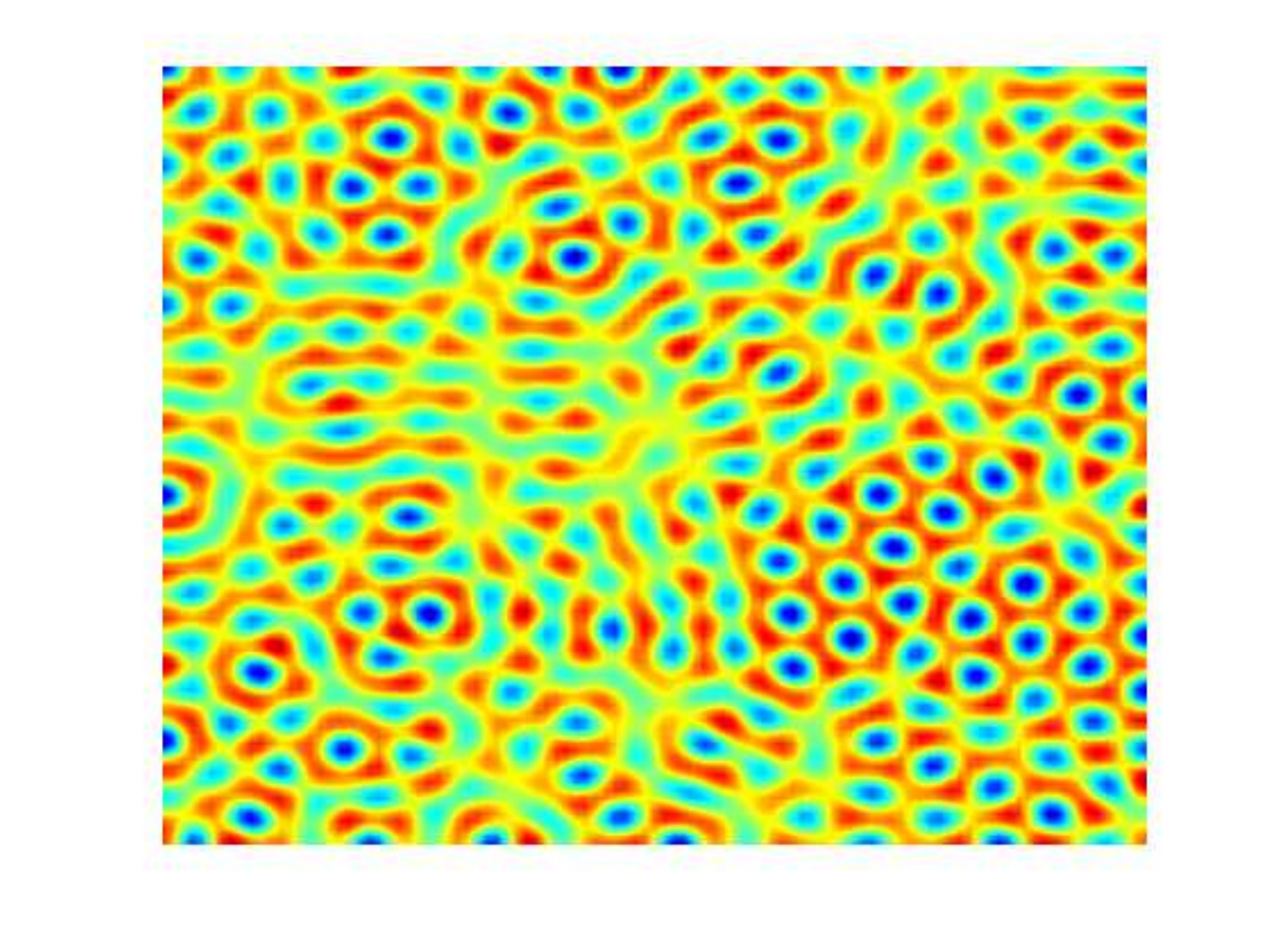}
}
\subfigure[t=1000]
{
\includegraphics[width=4cm,height=4cm]{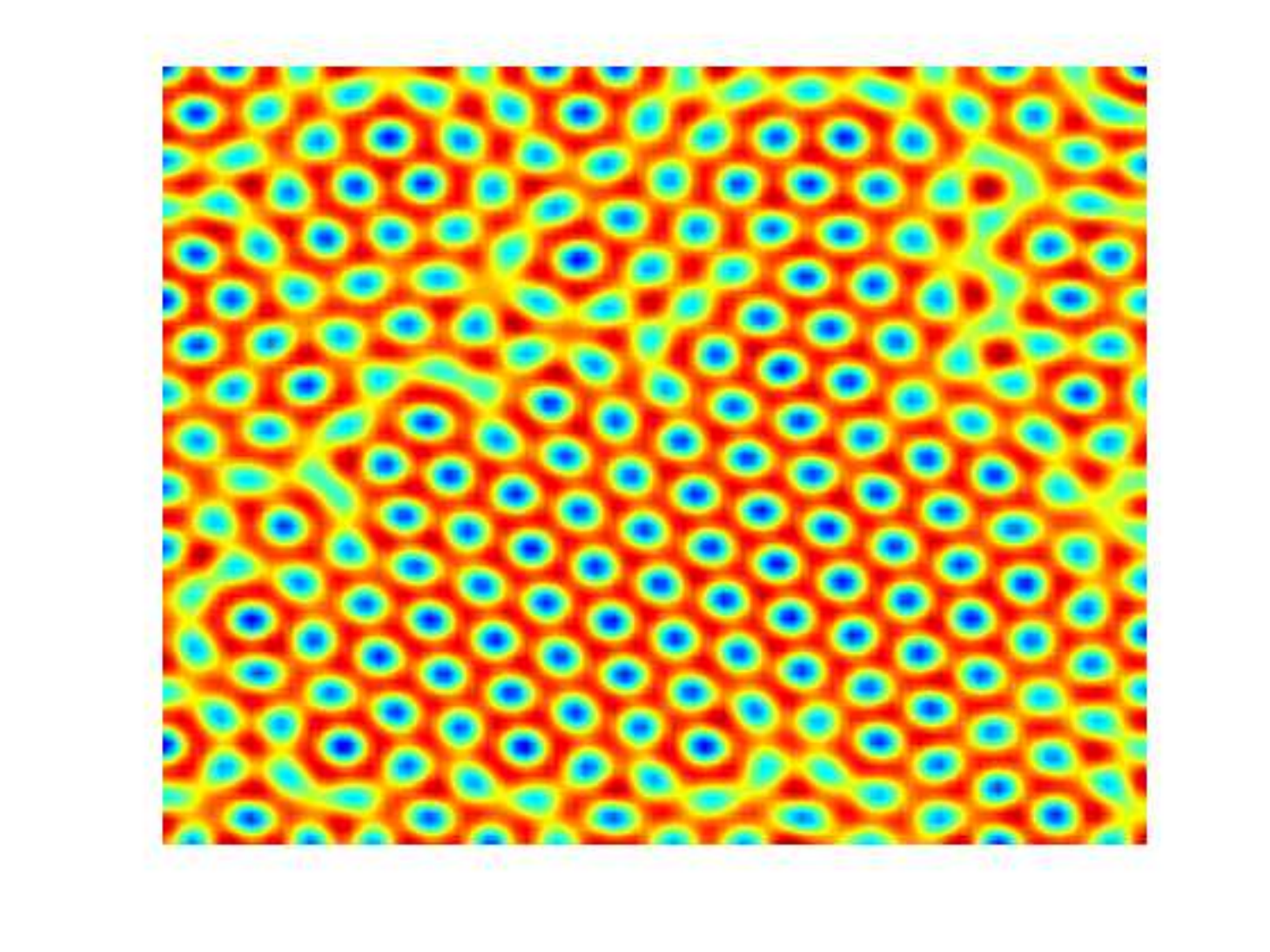}
}
\caption{Snapshots of the phase variable $\phi$ are taken at t=4, 15, 100, 200, 400, 1000 for example 4.}\label{fig1}
\end{figure}

\subsection{The Swift-Hohenberg model with quadratic-cubic nonlinearity}
In this subsection, we aim to study a comparative study of classical SAV and NAEV approaches. In the following example, we study the Swift-Hohenberg (SH) equation with quadratic-cubic nonlinearity to check the efficiency of NAEV approach. Given the following free energy functional \cite{lee2019energy}:
\begin{equation*}
\aligned
&E(\phi)=\int_\Omega\frac14\phi^4-\frac g3\phi^3+\frac12\phi\left(-\epsilon+(1+\Delta)^2\right)\phi d\textbf{x}.
\endaligned
\end{equation*}
where $\phi$ is the density field and $g\geq0$ and $\epsilon>0$ are constants with physical significance.

It is obvious that $E_1(\phi)=\int_\Omega\frac14\phi^4-\frac g3\phi^3-\frac12\epsilon\phi^2d\textbf{x}$ will be negative in some cases because of $-\int_\Omega\frac g3\phi^3+\frac12\epsilon\phi^2d\textbf{x}$ for $g>0$.

The SH model can be modeled by $L^2$-gradient flow from the energetic variation of the above energy functional $E(\phi)$:
\begin{equation}\label{section5_sh2}
\aligned
&\frac{\partial \phi}{\partial t}=-(\phi^3-g\phi^2+(-\epsilon+(1+\Delta)^2\phi)),
\endaligned
\end{equation}

Consider the following example:

\textbf{Example 5}: The initial condition which can be seen in \cite{lee2019energy} is
\begin{equation*}
\aligned
\phi_0(x,y)=
&0.07-0.02\cos\left(\frac{2\pi(x-12)}{32}\right)\sin\left(\frac{2\pi(y-1)}{32}\right)+0.02\cos^2\left(\frac{\pi(x+10)}{32}\right)\sin^2\left(\frac{\pi(y+3)}{32}\right)\\
&-0.01\sin^2\left(\frac{4\pi x}{32}\right)\sin^2\left(\frac{4\pi(y-6)}{32}\right),
\endaligned
\end{equation*}
on $\Omega=[0,128]^2$. We use $g=2$, $\epsilon=0.25$, $h=0.01$.

We list the $L^2$ errors and temporal convergence rates of the phase variable at $T=1$ with different time step sizes by choosing constant parameter in square root $C=10$ and $C=100$ for SAV scheme and $\kappa=0$ for NAEV scheme. We find that the SAV scheme with $C=100$ and NAEV scheme can all achieve almost perfect second order accuracy in time. However, the SAV approach with $C=10$ is failed to obtained the correct errors and temporal convergence rates. The reason is that the value of $E_1(\phi)$ becomes less than $-10$ as time goes on. It means that we may not give a proper constant $C$ before calculation in SAV approach. But for NAEV approach, we do not need to give a constant $C$ before and during calculation any more. Such advantages will contribute to the applicability of SAV approach.

\begin{table}[h!b!p!]
\small
\renewcommand{\arraystretch}{1.1}
\centering
\caption{\small The $L_2$ errors, temporal convergence rates of SAV and NAEV schemes for Example 5.}\label{tab:tab5}
\begin{tabular}{ccccccccccc}
\hline
$\Delta t$&\multicolumn{2}{c}{SAV-CN,C=10}&\multicolumn{3}{c}{SAV-CN,C=100}&\multicolumn{2}{c}{NAEV-CN}\\
\cline{2-6}\cline{7-8}
&$L_2$ error&Rate&&$L_2$ error&Rate&$L_2$ error&Rate\\
\hline
$2^{-4}$  &2.5570e-2&-    &&1.4842e-3&-   &1.1039e-3&-     \\
$2^{-5}$  &1.1924e-2&1.10 &&3.8859e-4&1.93&2.8735e-4&1.94      \\
$2^{-6}$  &5.1724e-3&1.21 &&9.9366e-5&1.97&7.3269e-5&1.97     \\
$2^{-7}$  &9.0814e-4&2.51 &&2.5120e-5&1.98&1.8497e-5&1.98       \\
$2^{-8}$  &1.1759e-2&-3.6 &&6.3150e-6&1.99&4.6468e-6&1.99      \\
$2^{-9}$  &4.9277e-3&1.25 &&1.5831e-6&2.00&1.1645e-6&2.00      \\
$2^{-10}$ &2.6940e-4&4.19 &&3.9633e-7&2.00&2.9148e-7&2.00     \\
\hline
\end{tabular}
\end{table}

\textbf{Example 6}: set $\epsilon=0.25$, $g=2$. The initial condition is
\begin{equation}\label{section5_sh3}
\aligned
&\phi_0(x,y)=0.1+0.1rand(x,y),
\endaligned
\end{equation}

Noting that the energy of SH model will reduce to be negative energy. The functional $E_1(\phi)$ in square root will be farther and farther away from zeros. It means that a very big constant $C$ is needed to keep the square root reasonable. In practice, from the left one in Figure \ref{fig2}, $E_1(\phi)$ will drop to about $-6100$. If we set $C<6100$ for SAV approach, $E_1(\phi)+C\geq0$ will not satisfied for some $\phi$. Thus, we choose $C=10000$ in SAV scheme. However, in NAEV approach, we only need to compute the initial energy $E_0$ before calculation and set the parameter $\kappa=0$. In Figure \ref{fig2}, we also show the energy evolution for the SH model when using SAV and NAEV approaches. One can see that the NAEV approach is more efficient than SAV approach. Figure \ref{fig3} shows the evolution of $\phi(x,y,t)$ using NAEV scheme with $\Delta t=1/20$. The similar features to those of SH model can obtain in \cite{lee2019energy}.
\begin{figure}[htp]
\centering
\includegraphics[width=7cm,height=7cm]{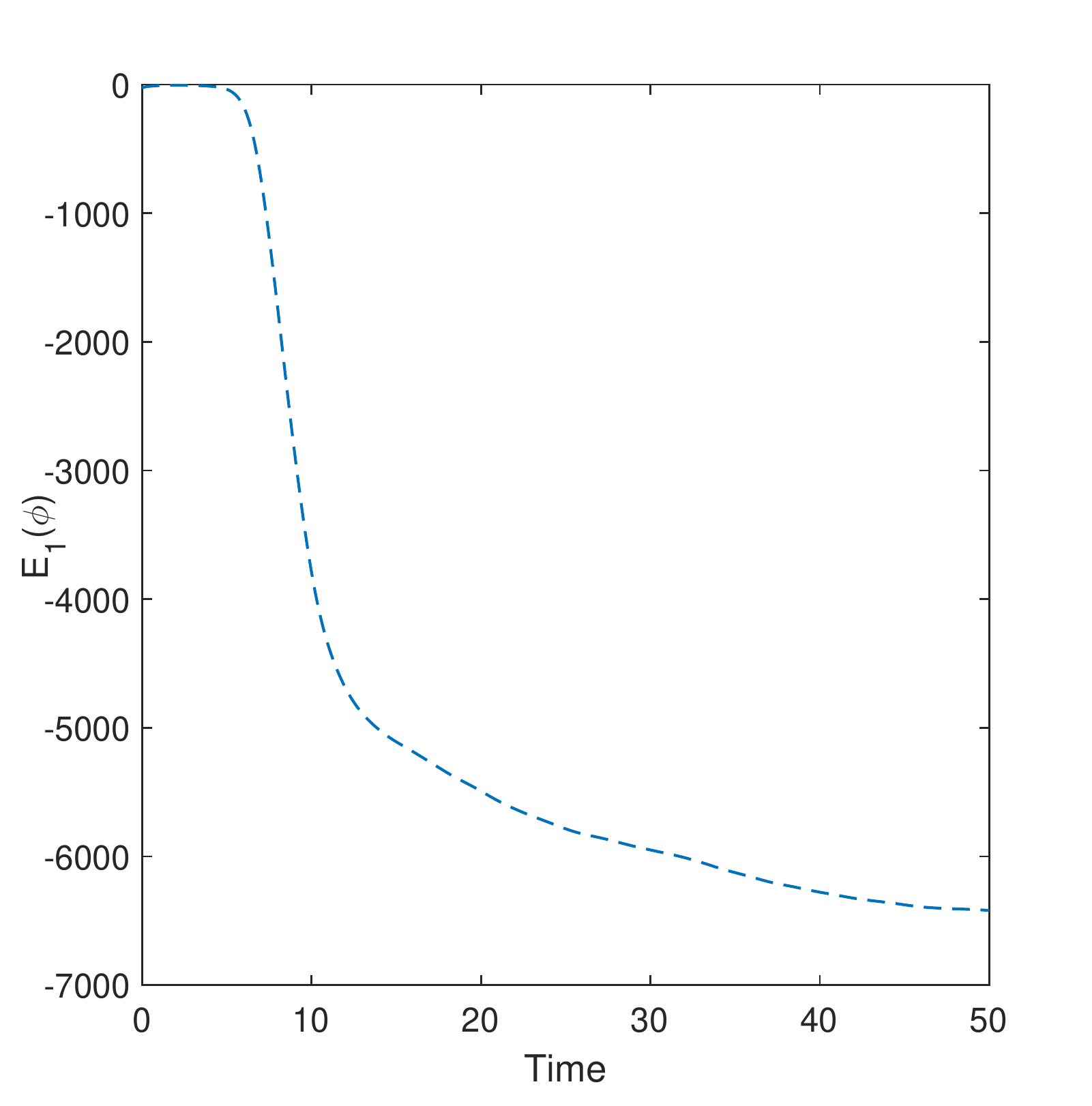}
\includegraphics[width=7cm,height=7cm]{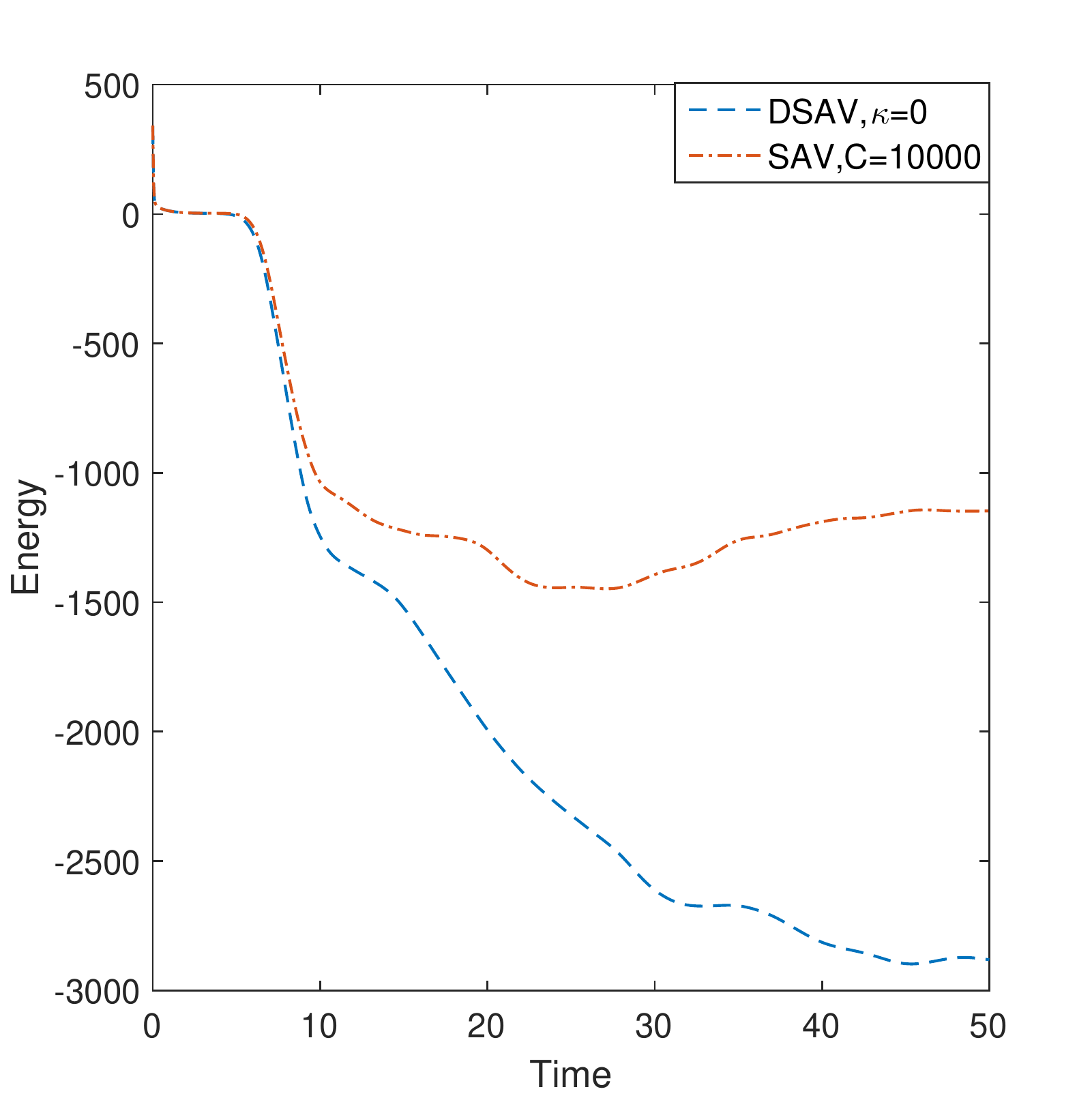}
\caption{Left: the value of $E_1(\phi)$ for example 6 with $\Delta t$=1/20. Right: energy evolution for SH equation for example 6 using SAV and NAEV approaches.}\label{fig2}
\end{figure}
\begin{figure}[htp]
\centering
\subfigure[t=2]{
\includegraphics[width=4cm,height=4cm]{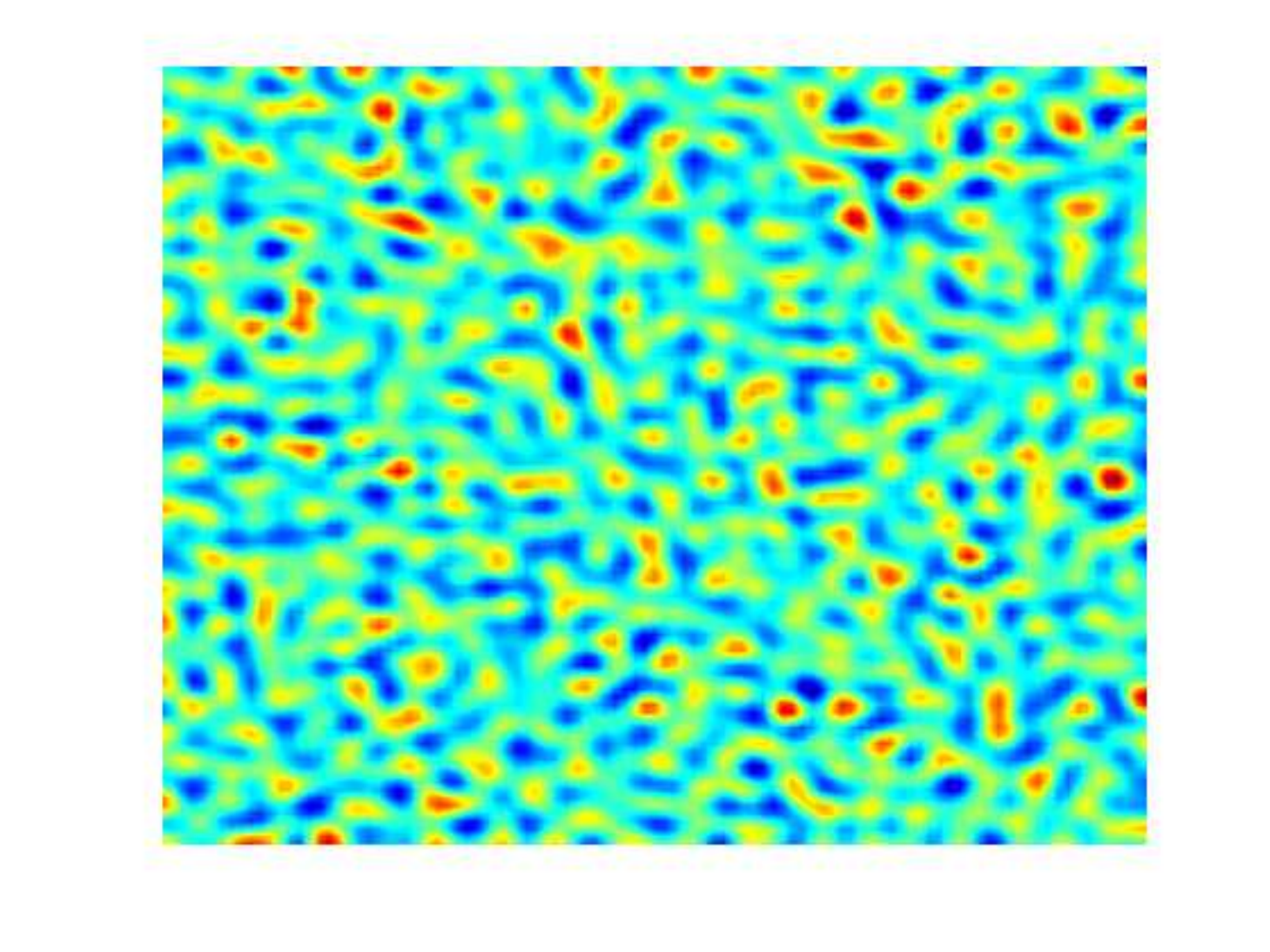}
}
\subfigure[t=10]
{
\includegraphics[width=4cm,height=4cm]{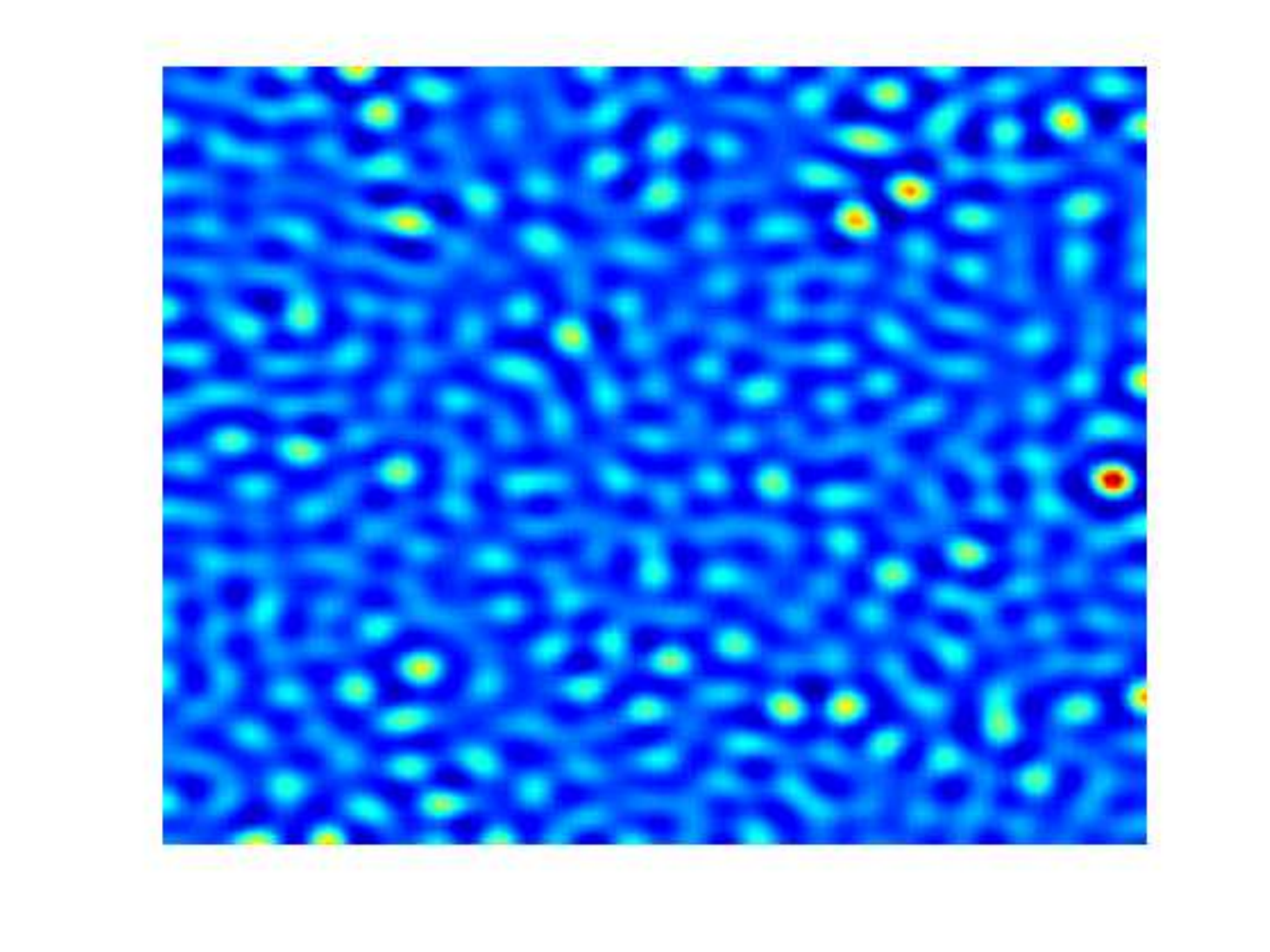}
}
\subfigure[t=30]
{
\includegraphics[width=4cm,height=4cm]{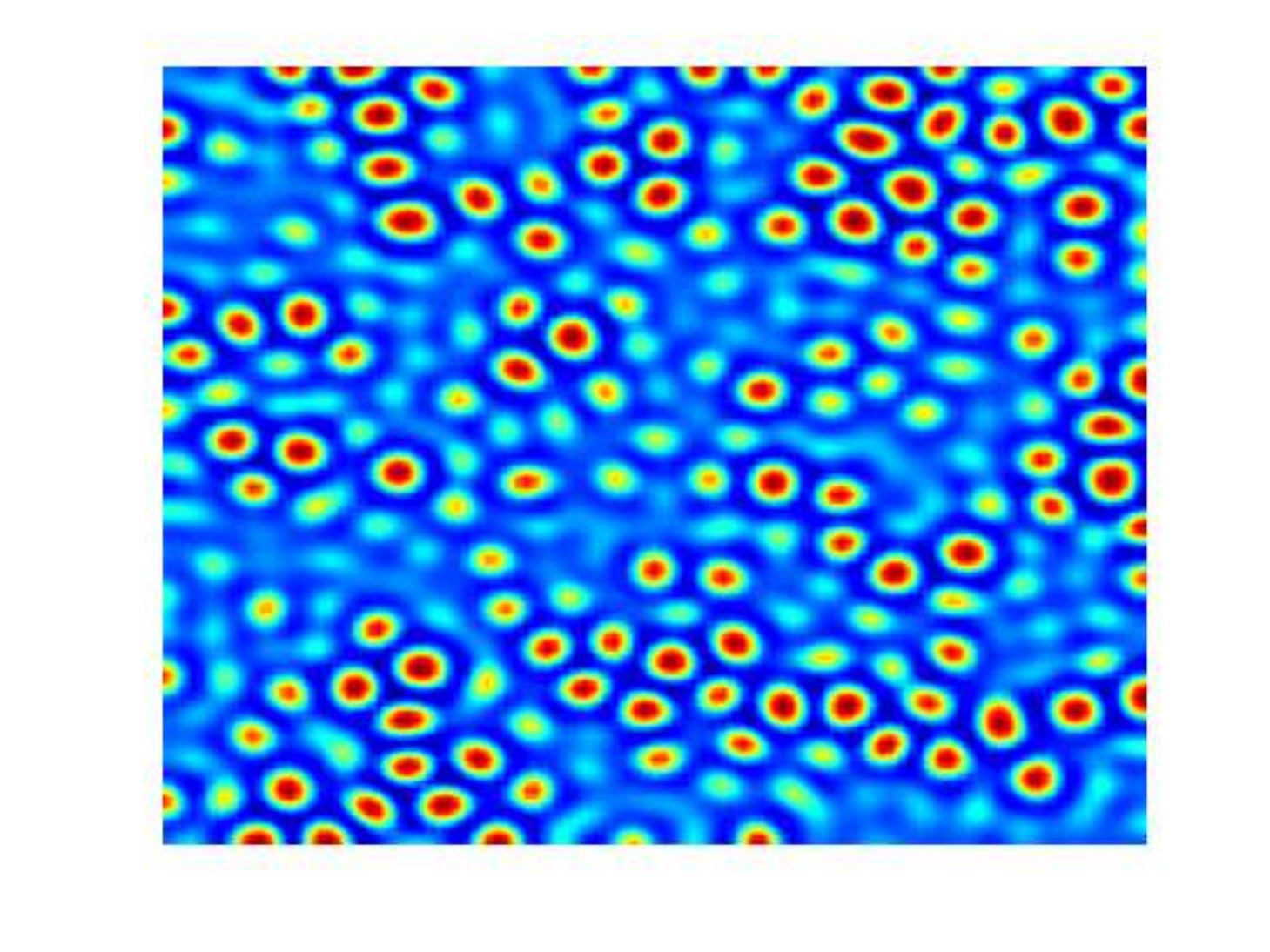}
}
\quad
\subfigure[t=40]{
\includegraphics[width=4cm,height=4cm]{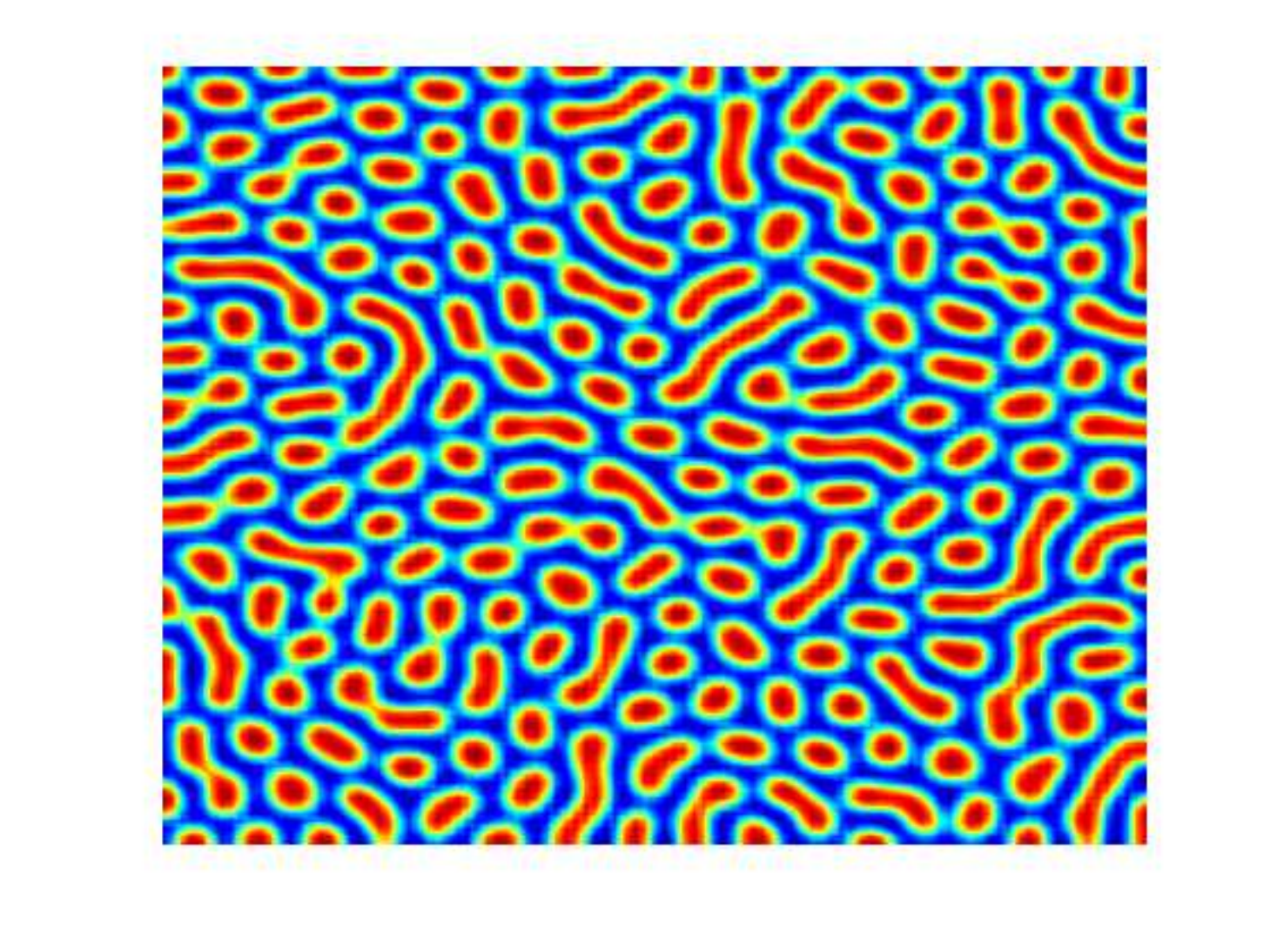}
}
\subfigure[t=60]
{
\includegraphics[width=4cm,height=4cm]{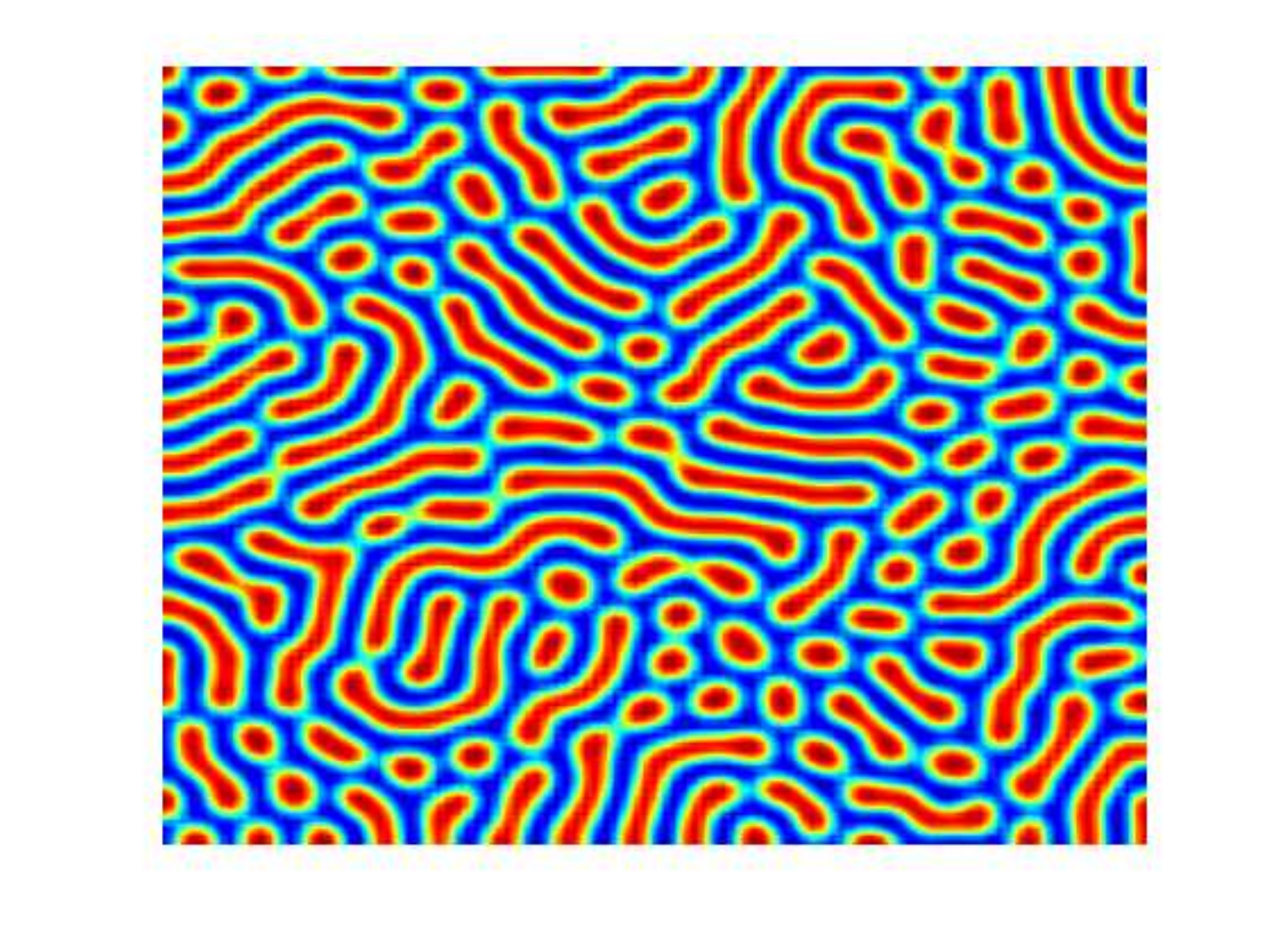}
}
\subfigure[t=120]
{
\includegraphics[width=4cm,height=4cm]{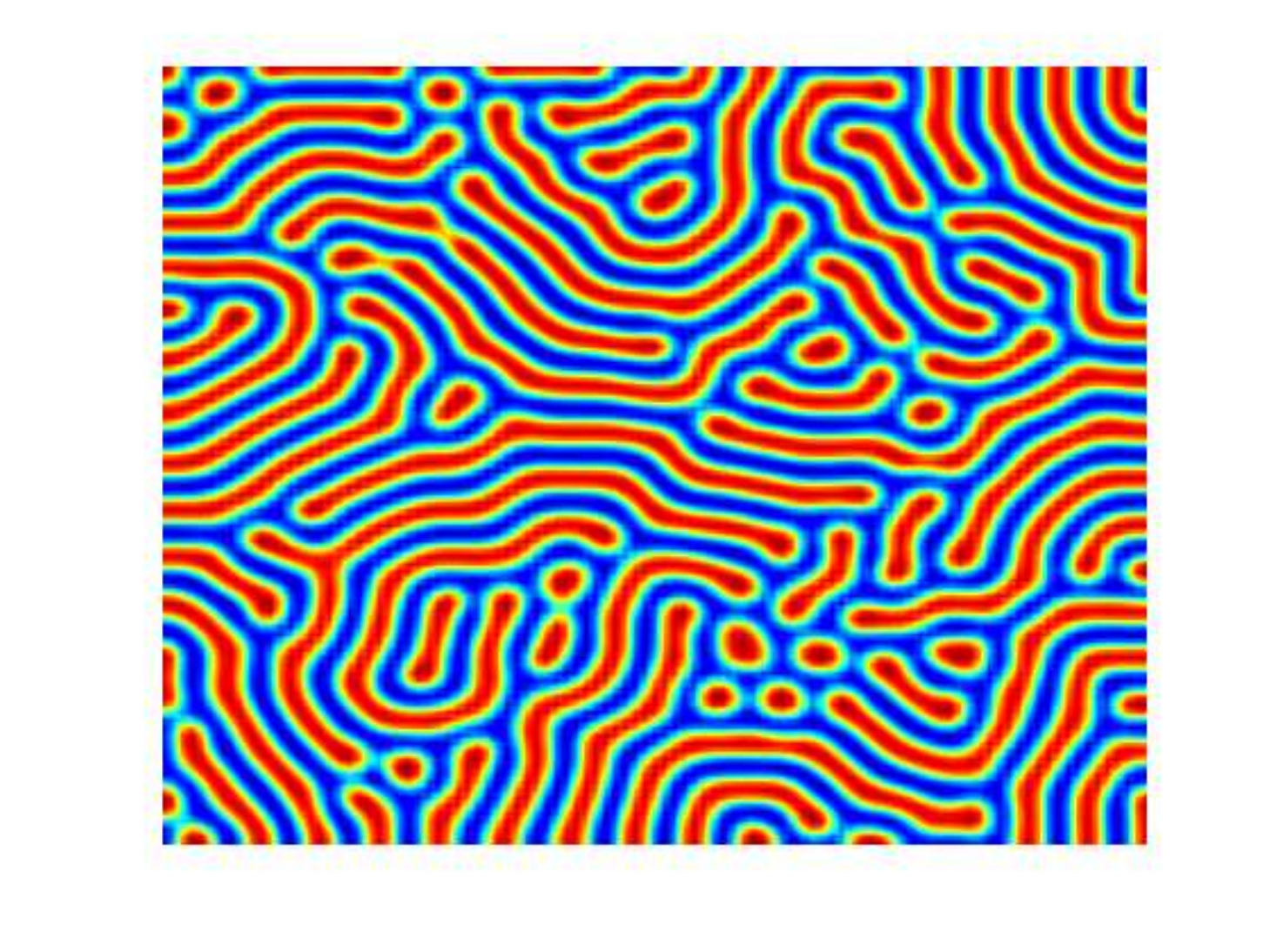}
}
\caption{Snapshots of the phase variable $\phi$ are taken at t=2, 10, 30, 40, 60, 120 for example 6.}\label{fig3}
\end{figure}
\section{Conclusion}
In this paper, we consider a novel auxiliary variable method to obtain energy stable schemes for gradient flows and prove the unconditional energy stability for the given semi-discrete schemes carefully and rigorously. Both first-order and second-order NAEV schemes are considered and proved to be efficiently.  A comparative study of SAV and NAEV approaches is considered to show the accuracy and efficiency. Finally, we present various 2D numerical simulations to demonstrate the stability and accuracy.
\section*{Acknowledgement}
No potential conflict of interest was reported by the author. We would like to acknowledge the assistance of volunteers in putting together this example manuscript and supplement.
\bibliographystyle{siamplain}
\bibliography{Deformed-SAV}

\end{document}